\newtheorem{lemma}{Lemma}
\newtheorem{proposition}{Proposition}
\newtheorem{theorem}{Theorem}
\newtheorem{corollary}{Corollary}
\newtheorem{remark}{Remark}
\title{Asymptotic expansions for the transmission eigenvalues of periodic scatterers of bounded support}
\author{Fioralba Cakoni\footnote{Department of Mathematics, Rutgers University, New Brunswick, NJ, USA (fc292@math.rutgers.edu)} \ \ and \ \ Shari Moskow\footnote{Department of Mathematics, Drexel University, Philadelphia, PA, USA (moskow@math.drexel.edu)}}
\date{}
\begin{document}

\maketitle
\begin{abstract} 
We consider the transmission eigenvalues  for a bounded scatterer with a periodically varying index of refraction, and derive the first order corrections to the limiting transmission eigenvalues. We assume the scatterer contrast to be of one sign, in which case the transmission eigenvalue problem can be written in terms of operators corresponding to a fourth order PDE with periodic coefficients. We perform two-scale asymptotics for this biharmonic type homogenization problem and show convergence estimates which   require a boundary corrector function, and this boundary corrector function appears in the formula for the transmission eigenvalues correction.  
\end{abstract}

\section{Introduction}
The transmission eigenvalue problem plays a fundamental role in scattering theory for inhomogeneous media. Transmission eigenvalues correspond to interrogating frequencies at which there exists an incident field that does not scatter by the medium. Despite its deceptively simple formulation—two elliptic PDEs in a bounded domain (one governing wave propagation in the scattering medium and the other in the background that occupies the support of the medium) that share the same Cauchy data on the boundary—the problem presents a remarkably intricate mathematical structure. In particular, it is a non-self-adjoint eigenvalue problem for a non-strongly elliptic operator, making the investigation of its spectral properties highly challenging. We refer the reader to \cite{CCH-book} for the significance of this problem in scattering phenomena and inverse scattering theory.

\noindent
More precisely, let $n(x)$ denote the refractive index of an inhomogeneous medium of bounded support, which is a perturbation of the homogeneous background medium with refractive index scaled to one. Define $\overline{D} := \operatorname{supp}(n-1)$. The transmission eigenvalue problem is then formulated as finding $v \neq 0$ and $w \neq 0$ satisfying
\begin{equation}\label{ten2}
\left\{
\begin{array}{rrcl}
\Delta w + k^2 n(x) w = 0 & \quad & \text{in } D, \\
\Delta v + k^2 v = 0 & \quad & \text{in } D,\\
w-v = 0 & \quad & \text{on } \partial D,\\
\displaystyle \frac{\partial w}{\partial \nu} - \frac{\partial v}{\partial \nu} = 0 & \quad & \text{on } \partial D,
\end{array}
\right.
\end{equation}
where $k > 0$ is the wave number, proportional to the interrogating frequency $\omega$. In this formulation, $u := w-v$ corresponds to the scattered field, which by virtue of the boundary conditions (provided $\partial D$ has some regularity), can be extended by zero into the exterior of $D$, whereas $v$ is the restriction to $D$ of the incoming incident wave. This formulation shows that a necessary condition for an incident wave to remain unscattered by the inhomogeneity $(D,n)$ is the existence of a nontrivial solution to \eqref{ten2}. The transmission eigenvalue problem is known to be non-self-adjoint \cite{CCH-book}, and complex transmission eigenvalues may occur, although only the real ones are physically relevant to non-scattering. Values of $k \in \mathbb{C}$ for which \eqref{ten2} admits nontrivial solutions $(w,v)$ are called transmission eigenvalues. Note that, it can be shown that real transmission eigenvalues can be determined from measured scattering data \cite{CCH-book, eigm1,eigm2}, hence they can be used to determine information about refractive index $n$ when solving the  inverse scattering problem. There is a vast literature on the spectral analysis of the transmission eigenvalue problem. The discreteness of the spectrum, completeness of eigenfunctions, and Weyl’s asymptotics have been established under various assumptions on $n-1$ in \cite{nguyenn, Ki, rob, vodev, vodev2}. In particular, if $n-1$ has a fixed sign uniformly in $D$, then there exists an infinite sequence of real transmission eigenvalues accumulating only at $+\infty$. 

\medskip
\noindent
In this work, we deal with the perturbation analysis of transmission eigenvalues when the inhomogeneous medium \((D,n)\), with \(D \subset \mathbb{R}^d\) for \(d=2,3\) bounded, is periodic and highly oscillating. More precisely, let \(\epsilon>0\) denote the characteristic size of the periodic unit cell, which is assumed to be small relative to the size of \(D\), and let \(Y=[0,\,1]^d\) be the rescaled unit cell. We assume that the refractive index is given by  
\[
n_\epsilon(x):=n(x/\epsilon) \in L^\infty(D),
\]  
with \(n\) periodic in \(y=x/\epsilon\) with period \(Y\).  
Our concern is how the real transmission eigenvalues perturb as \(\epsilon \to 0\). The homogenization theory for the corresponding direct scattering problem has been developed in \cite{CaMoPa,CaGuMoPa}, while the convergence of the real transmission eigenvalues to those of the homogenized medium was proven in \cite{CaHaHa}. The main goal of this paper is to provide an explicit first-order correction term in the asymptotic expansion of the real transmission eigenvalue. Since the correction to the homogenized transmission eigenvalue can be determined, the hope is that this correction term captures microstructural information of the periodic medium.   Such asymptotic analyses have been carried out for transmission eigenvalue problems in media containing small-size perturbations as the perturbation size tends to zero in \cite{CaHaMo,CaMoRo}. Our perturbation analysis is based on the work of \cite{Os}, extended to nonlinear eigenvalue problems in \cite{Mo,FuMo}. In particular, our approach makes use of the expression given in \cite{FuMo}.  

\medskip
\noindent
In this paper we formulate the transmission eigenvalue problem as a non-linear eigenvalue problem for a fourth-order partial differential operator given by (\ref{10}). Using two-scale asymptotics for the resolvent of the bi-Laplacian-type operator, we establish higher-order convergence estimates as $\epsilon \to 0$. While homogenization for periodic higher-order PDEs is known \cite{ Fr, high2, high,Pa}, these higher order estimates including boundary corrector functions appear to be new. We show first that the homogenized problem reduces to the eigenvalue problem with cell-averaged refractive index ${\overline n}$, recovering \cite{CaHaHa}. We then construct higher-order correctors: the first-order expansion includes only a boundary correction (consistent with the homogenization of direct scattering problems \cite{CaMoPa,CaGuMoPa}), while higher orders involve both bulk and boundary terms. The resolvent estimates yield asymptotics for simple real transmission eigenvalues, based on the formula in \cite[Theorem 3.2]{FuMo}. The leading correction involves the  boundary corrector integrated against the scaled eigenfunction. The boundary corrector satisfies a fourth-order boundary value problem with oscillatory coefficients, and it is highly desirable to understand the limiting behavior of  the boundary corrector as $\epsilon \to 0$. This  is a delicate issue in the theory of homogenization, and for the state of the art of this question for second-order PDEs is  discussed in \cite{GeMa1, GeMa2, MoVo1, SaVo}. Here, although we can prove that the boundary corrector is $L^2$-bounded with respect to $\epsilon$, and thus admits weak subsequential limit(s), we characterize its (non-zero) limit only in one dimension. We find that the first order corrections are not unique and depend on the interaction of the boundary of
the scatterer with the microstructure.  The two and three-dimensional cases, which are technically more involved, will be addressed in a subsequent paper.

\section{Description of problem}
We assume that the  bounded domain $D \subset \mathbb{R}^d$ has  $C^2$ boundary, and  as stated in the introduction we assume that  $n(y)$ is a bounded  periodic function of $y$ in the cell $Y= [0,1]^d $.  Let $H^2_0(D)$ denote the Sobolev space given by
$$H^2_0(D) := \left \{  u\in H^2(D) : u=\frac{\partial u}{\partial \nu}=0\text{ on $\partial D$} \right \}.$$ 
or, equivalently, the $H^2$ closure of $C^\infty_0(D)$ functions, equipped with the inner product 
$$ (u,v)_{H^2_0(D)} = (\Delta u , \Delta v)_{L^2(D)}.$$
Consider now the interior transmission eigenvalue problem (\ref{ten2}) formulated above  which has a periodic coefficient with period $\epsilon>0$, small compared to its support $D$.  Letting $\tau:=k^2$, we wish to find nontrivial  $w, v\in L^2(D)$ with $w-v\in H^2_0(D)$ satisfying  
\begin{alignat}{5}\label{tep}
\Delta w+\tau n(x/\epsilon) w &= 0 &\qquad & \mathrm{in} \ D \\
\Delta v+\tau v& =0 & &  \mathrm{in}\  D \\
w&= v  & &\mathrm{on} \ \partial D \\
\displaystyle \frac{\partial w}{\partial \nu}&=\displaystyle \frac{\partial v}{\partial \nu} & & \mathrm{on}\  \partial D.\label{tep1} 
\end{alignat}
Note that the boundary conditions are equivalent to saying that $w-v \in H^2_0(D)$. Here the eigenvalue parameter is $\tau:=k^2$. As already mentioned, the transmission eigenvalue problem is not self-adjoint, and in the spherical symmetric case it is known to have complex eigenvalues \cite[Chapter 6]{CCH-book}. Here we are concerned only with the real transmission eigenvalue, since they are the ones which can be measured from scattering data.  More precisely, here transmission eigenvalues refers to values of $\tau\in{\mathbb R}_+$ for which the problem (\ref{tep})-(\ref{tep1}) has a nontrivial solution.
In this work we limit ourselves to the case when $n(x/\epsilon)-1$ is of one sign, and for the calculations we assume that $n(x/\epsilon) -1 \geq c >0 $, with $c$ independent of $\epsilon$. In this case, an infinite number of real transmission eigenvalues are known to exist \cite{CaGiHa}. We are interested in the behavior of these transmission eigenvalues as the period size $\epsilon$ approaches zero. It is known from the work \cite{CaHaHa} that the real transmission eigenvalues $\{ \lambda_\epsilon \} $ (omitting indexing) converge to those corresponding to $\{\lambda_0\}$  for the "homogenized" transmission eigenvalue problem, that is, those corresponding to 
\begin{alignat}{5}\label{tep0}
\Delta w+\tau \overline{n} w &= 0 &\qquad & \mathrm{in} \ D \\
\Delta v+\tau v& =0 & &  \mathrm{in}\  D \\
w&= v  & &\mathrm{on} \ \partial D \\
\displaystyle \frac{\partial w}{\partial \nu}&=\displaystyle \frac{\partial v}{\partial \nu} & & \mathrm{on}\  \partial D.\label{tep10} 
\end{alignat}
where  $\overline{n}$ denotes the period cell average 
$$ \overline{n} = {1\over{|Y|}} \int_Y n(y) dy .$$
Our motivation here is to find the next order term, i.e. the corrections $\tau^{(1)}$ , where each $$ \tau_\epsilon = \tau_0 +\epsilon \tau^{(1)} + o(\epsilon).$$ 

\noindent
From the work \cite{CaGiHa}, for this setup we have that the transmission eigenvalue problem (\ref{tep}) is equivalent to the fourth order nonlinear eigenvalue problem: Find $\tau$ such that there exist  nontrivial  $u=w-v\in H^2_0(D)$  such that 
\begin{equation}\label{10}
(\Delta +  \tau n_\epsilon) \frac{1}{n_\epsilon -1}(\Delta + \tau) u = 0,
\end{equation}
where we use $n_\epsilon$ denote the periodic $$n_\epsilon = n(x/\epsilon).$$
We can state this in variational form follows: Find $u\in H^2_0(D)$ such that
\begin{equation}\label{var}
\int_D \frac{1}{n_\epsilon -1}(\Delta u + \tau u)(\Delta \phi +\tau  n_\epsilon \phi )\, \mathrm{d}x = 0 \quad \text{for all $\phi\in H^2_0(D)$.}
\end{equation}
Following \cite{CaGiHa}, we rewrite this in terms of variationally defined operators. Let  us  define the bounded bilinear forms on $H^2_0(D)\times H^2_0(D)$,
\begin{equation} \label{AA}
\mathcal{A}_{\tau,\epsilon}(u,\phi)= \left ( 
\frac{1}{n_\epsilon-1}(\Delta u + \tau u), (\Delta \phi + \tau \phi)
\right )_{L^2(D)} + \tau^2 (u,\phi)_{L^2(D)}
\end{equation} and 
\begin{equation}
\mathcal{B}(u,\phi) = (\nabla u, \nabla \phi)_{L^2(D)}.
\end{equation}
By the Riesz Representation Theorem, these bilinear forms define bounded operators $\mathbb{A}_{\tau,\epsilon}: H^2_0(D)\rightarrow H^2_0(D)$ and $\mathbb{B}: H^2_0(D)\rightarrow H^2_0(D) $ which are such that 
\begin{equation}\label{Adef}
\mathcal{A}_{\tau,\epsilon}(u,\phi)=(\mathbb{A}_{\tau,\epsilon}u,\phi)_{H^2_0(D)}\quad
 \text{ and }\quad 
\mathcal{B}(u,\phi)=(\mathbb{B} u,\phi)_{H^2_0(D)}
\end{equation}
for all $u,\phi\in H^2_0(D)$. We may also find it convenient to write these variationally defined operators using pde notation. For given $f\in H^2(D)$ we have that 
$$ \mathbb{A}_{\tau,\epsilon}^{-1} f  = \left( (\Delta +\tau ) \frac{1}{n_\epsilon-1} (\Delta +\tau ) +\tau^2 \right)^{-1}\Delta\Delta f $$
and 
$$ \mathbb{B}f = (\Delta\Delta)^{-1}\Delta f $$
where the inverses of the fourth order operators have range in $H^2_0(D)$, where solutions are unique. We also note that the operator $\mathbb{B}: H^2( D)\rightarrow H^2_0( D)$ has a bounded extension on $L^2( D)$; for any $f\in L^2( D)$, $\Delta f$ is understood in the sense of $H^{-2}(D)$, the dual of $H^2_0(D)$. We continue to use $\mathbb{B}$ to denote this operator $\mathbb{B}:L^2( D)\rightarrow H^2_0( D)$, so that $\mathbb{B}$ is clearly compact from $L^2( D)$ to itself. Furthermore, $\mathbb{A}_{\tau,\epsilon}$ is invertible on $H^2_0(D)$ for positive real $\tau$, and the coercivity constant is independent of $\tau$  \cite{CaHa}. 
The variational form (\ref{var}) of the transmission eigenvalue problem is equivalent to finding $u\in L^2(D)$ such that
\begin{equation}\label{evprob}
(I-\tau \mathbb{A}_{\tau,\epsilon}^{-1}\mathbb{B})u=0.
\end{equation}
Define the linear operator $T_\epsilon(\tau): L^2(D)\to L^2(D)$ for $\epsilon\geq 0$ and $\tau\in \mathbb{C}$ such that
\begin{eqnarray}\label{Te}
 T_\epsilon (\tau) &:= & \mathbb{A}_{\tau,\epsilon}^{-1} \mathbb{B}, \\ 
& = &\left( (\Delta +\tau ) \frac{1}{n_\epsilon-1} (\Delta +\tau ) +\tau^2 \right)^{-1}\Delta  \nonumber
 \end{eqnarray}
so we can write (\ref{evprob}) as
 \begin{equation} \label{TEepsilon}
 \tau T_\epsilon (\tau) u = u,
 \end{equation}
 and its limiting problem
 \begin{equation} \label{TE0}
 \tau T_0 (\tau) u = u.
 \end{equation}
 Here $T_0 $ is defined as 
 \begin{eqnarray}\label{T0}
 T_0 (\tau) &:=&  \mathbb{A}_{\tau,0}^{-1} \mathbb{B}, \\
 & = &\left( (\Delta +\tau ) \frac{1}{\overline{n}-1} (\Delta +\tau ) +\tau^2 \right)^{-1}\Delta  \nonumber
 \end{eqnarray}
 where $A_{\tau,0}$ is defined as in (\ref{Adef}) but with $n_\epsilon$ replaced with its limiting value $n_0=\overline{n}$. 

\noindent
We have now rephrased the problem as a nonlinear eigenvalue perturbation problem. That is, a transmission eigenvalue $\tau_\epsilon$  is a value for $\tau$ such that there exists a nontrivial $u\in L^2(D)$ satisfying
$$\tau_\epsilon T_\epsilon(\tau_\epsilon) u = u$$
for $\epsilon>0$. A limiting transmission eigenvalue is a value $\tau_0$ such that there exists nontrivial $u\in L^2(D)$ such that
$$\tau_0 T_0(\tau_0) u = u.$$
In order to find a correction formula for the transmission eigenvalues of the perturbed problem in terms of the eigenvalues and eigenvectors of the background problem, we will apply a result in \cite{Mo}, an application of Osborn's theorem for approximating the eigenvalues of compact operators \cite{Os}.  
Let $\tau_\epsilon$ be the eigenvalue associated with the nonlinear eigenvalue problem \eqref{TEepsilon}
and let $\tau_0$ be the eigenvalue corresponding to the limiting eigenvalue problem
\eqref{TE0}.
In this paper we will derive an expression for the next order correction term $\tau^{(1)}$ in the asymptotic expansion
\begin{equation*}
\tau_\epsilon = \tau_0+ \epsilon\tau^{(1)} + o(\epsilon ).
\end{equation*} 
\begin{remark}
{\em For sake of presentation, we present the calculations for the case when $n_\epsilon>1$ uniformly in $D$. Similar analysis can be done in the case $0<n_\epsilon<1$ uniformly in $D$. In this case the definition of the coercive part  (\ref{AA}) is replaced  (see \cite[Section 4.2]{CCH-book})
$$
\tilde{\mathcal{A}}_{\tau,\epsilon}(u,\phi)= \left ( 
\frac{1}{1-n_\epsilon}(\Delta u + \tau n_\epsilon u), (\Delta \phi + \tau n_\epsilon  \phi)
\right )_{L^2(D)} + \tau^2 (n_\epsilon u,\phi)_{L^2(D)}
$$
with the corresponding operators defined accordingly.}
\end{remark}
\section{Operator convergence: a fourth order homogenization problem}\label{sec3}
In order to apply the eigenvalue correction theorem, we will need to explore the convergence of $T_\epsilon(\tau)$ to $T_0(\tau)$, or more precisely, we will need an asymptotic expansion  with respect to $\epsilon$ for $T_\epsilon(\tau)$ and corresponding norm estimates. We need to focus on $ \mathbb{A}_{\tau,\epsilon}^{-1}$, since all of the $\epsilon$ dependence is in this operator. Note that if $$u_\epsilon = \mathbb{A}_{\tau,\epsilon}^{-1} f, $$
then  $u_\epsilon\in H^2_0(D)$ is  the variational solution to  
\begin{equation} \label{eq:uepsn} (\Delta +\tau ) {1\over{n(x/\epsilon) -1}}(\Delta +\tau )u_\epsilon +\tau^2 u_\epsilon = \Delta\Delta f \mbox{ in } D \end{equation}
where $n(y)$ is periodic on the period cell $Y$.  For simplicity of exposition, we let 
\begin{equation} \label{eq:a} a(x/\epsilon) = {1\over{n(x/\epsilon) -1}}  \end{equation}
and $$h= \Delta \Delta f ,$$
so that $u_\epsilon\in H^2_0(D)$ solves 
\begin{equation} \label{eq:uepsa} (\Delta +\tau ) a(x/\epsilon) (\Delta +\tau )u_\epsilon +\tau^2 u_\epsilon = h\mbox{ in } D ;\end{equation}
 the periodic homogenization problem which is the subject of this section.  We note that such fourth order periodic problems have been studied in the past, see for example, \cite{Pa},\cite{Fr},   and so the expansion of the main part of the operator is not new. Here we focus on obtaining high enough order $L^2$ norm estimates, which we will need to apply the eigenvalue perturbation theorem. These estimates require the introduction and analysis of a fourth order boundary corrector function. 

\subsection{Formal asymptotics} We proceed by assuming that $a(y)\geq a_0>0$  is positive, bounded in $L^\infty$ and periodic, and we do standard two-scale asymptotic expansions. Let $y=x/\epsilon$ so that from the chain rule 
$$\nabla = \nabla_x +{1\over{\epsilon}} \nabla_y , $$
and assume the ansatz for the solution $u_\epsilon$ of (\ref{eq:uepsa})
\begin{equation} \label{eq:ansatz} u_\epsilon \approx u_0(x,y) + \epsilon u^{(1)} (x,y) +\epsilon^2 u^{(2)}(x,y) +\ldots \end{equation}
where each $u^{(i)}$ is periodic in the fast variable $y$ in the sense of $H^2_\sharp(Y)$, where $H^2_\sharp(Y)$ is defined to be $H^2$ functions on the torus, defined in terms of the decay of the Fourier coefficients. Equivalently, this is all $H^2(Y)$ functions which are also in $H^2$ across the matching boundaries (the closure of smooth functions on the torus in the $H^2$ norm). We note that 
$$ \Delta = \Delta_x + {2\over{\epsilon}} \nabla_y\cdot\nabla_x + {1\over{\epsilon^2}} \Delta_y. $$
We could proceed by plugging the ansatz (\ref{eq:ansatz}) into (\ref{eq:uepsa}), however, we will instead rewrite (\ref{eq:uepsa}) as a second order system. The use of a lower order system both simplifies the derivation of the terms in the ansantz and potentially allows for lower regularity assumptions.  To this end, we let $ v_\epsilon = a(x/\epsilon) (\Delta +\tau )u_\epsilon $ so that the pair $(u_\epsilon, v_\epsilon)$ solves
\begin{eqnarray}\label{eq:uepssystem}  a(x/\epsilon) (\Delta +\tau )u_\epsilon  &=& v_\epsilon  \\ (\Delta +\tau )v_\epsilon  +\tau^2 u_\epsilon &=& h, \nonumber\end{eqnarray}
and so we also expand 
\begin{equation} \label{eq:ansatzv} v_\epsilon \approx v_0(x,y) + \epsilon v^{(1)} (x,y) +\epsilon^2 v^{(2)}(x,y) +\ldots .\end{equation}
We plug the ansatz into the system, 
\begin{multline}  a(y) (\Delta_x + {2\over{\epsilon}} \nabla_y\cdot\nabla_x + {1\over{\epsilon^2}} \Delta_y +\tau ) ( u_0 +\epsilon u^{(1)} +\epsilon^2 u^{(2)} +\ldots ) = v_0 +\epsilon v^{(1)} +\epsilon^2 v^{(2)} +\ldots \\ (\Delta_x + {2\over{\epsilon}} \nabla_y\cdot\nabla_x + {1\over{\epsilon^2}} \Delta_y +\tau ) ( v_0 +\epsilon v^{(1)} +\epsilon^2 v^{(2)} +\ldots ) \\ + \tau^2( u_0 +\epsilon u^{(1)} +\epsilon^2 u^{(2)} +\ldots ) = h ,\end{multline} 
and set equal the coefficients of like powers of epsilon to obtain the equations
\begin{eqnarray}\label{eq:eps-2} {1\over{\epsilon^2}}: \quad  a\Delta_y u_0 &=& 0 \\ \Delta_y v_0 &=& 0 \nonumber\end{eqnarray}
\begin{eqnarray}\label{eq:eps-1} {1\over{\epsilon}}: \quad  2a \nabla_x \nabla_y u_0 +  a \Delta_y u^{(1)} &=& 0 \\ 2\nabla_x \nabla_y v_0 + \Delta_y v^{(1)} &=& 0 \nonumber \end{eqnarray}
\begin{eqnarray}\label{eq:eps0} {\epsilon^0}:  \quad  a(\Delta_x+\tau)u_0 + 2a \nabla_x\cdot \nabla_y u^{(1)}+  a \Delta_y u^{(2)} &=& v_0 \\  (\Delta_x+\tau)v_0 + 2\nabla_x\cdot \nabla_y v^{(1)} + \Delta_y v^{(2)} +\tau^2 u_0 &=& h\nonumber \end{eqnarray}
\begin{eqnarray}\label{eq:eps1} {\epsilon}:  \quad  a(\Delta_x+\tau)u^{(1)} + 2a \nabla_x\cdot \nabla_y u^{(2)}+  a \Delta_y u^{(3)} &=& v^{(1)} \\  (\Delta_x+\tau)v^{(1)} + 2\nabla_x\cdot \nabla_y v^{(2)} + \Delta_y v^{(3)} +\tau^2 u^{(1)} &=& 0\nonumber \end{eqnarray}
\begin{eqnarray}\label{eq:eps2} {\epsilon^2}:  \quad  a(\Delta_x+\tau)u^{(2)} + 2a \nabla_x\cdot \nabla_y u^{(3)}+  a \Delta_y u^{(4)} &=& v^{(2)} \\  (\Delta_x+\tau)v^{(2)} + 2\nabla_x\cdot \nabla_y v^{(3)} + \Delta_y v^{(4)} +\tau^2 u^{(2)} &=& 0,\nonumber\end{eqnarray}
and in general, for $n\geq 1$, the equation corresponding to $\epsilon^n$ is 
\begin{eqnarray}\label{eq:epsn} {\epsilon^n}:  \quad  a(\Delta_x+\tau)u^{(n)} + 2a \nabla_x\cdot \nabla_y u^{(n+1)}+  a \Delta_y u^{(n+2)} &=& v^{(n)} \\  (\Delta_x+\tau)v^{(n)} + 2\nabla_x\cdot \nabla_y v^{(n+1)} + \Delta_y v^{(n+2)} +\tau^2 u^{(n)} &=& 0.\nonumber\end{eqnarray}
First we observe that the first two sets of equations (\ref{eq:eps-2}) and (\ref{eq:eps-1}) imply that the first terms do not depend on $y$, that is, $u_0 =u_0(x)$, $v_0=v_0(x)$, $u^{(1)} =u^{(1)}(x)$, and $v^{(1)}(x)=v_0(x)$. Since $v_0(x)$ does not depend on $y$, the first equation in  (\ref{eq:eps0}) suggests that we should take 
\begin{equation} \label{eq:u2} u^{(2)} = \chi(y) (\Delta_x +\tau ) u_0 \end{equation}
where $$a+ a\Delta_y \chi = c$$ for some constant $c$. Periodicity implies that we must have $c =  \overline{a^{-1}}^{-1}$, leading to  
\begin{equation}\label{eq:chi} \Delta_y \chi(y) = \overline{a^{-1}}^{-1}/a -1. \end{equation}
We note that apriori $u^{(2)}$ could still have an additive function of $x$.
Taking the $Y$ cell average of (\ref{eq:eps0}) and using the formula for $\chi$ we find the homogenized problem
  \begin{equation} \label{homog2}  (\Delta_x +\tau) \overline{a^{-1}}^{-1} (\Delta_x +\tau) u_0 +\tau^2 u_0= h , \end{equation}
  accompanied by 
  \begin{equation} \label{eq:v0} v_0 = \overline{a^{-1}}^{-1}(\Delta_x +\tau) u_0.\end{equation}
Now, if we take 
\begin{equation} \label{eq:u3} u^{(3)}= \vec{\gamma}(y)\cdot \nabla_x (\Delta_x +\tau) u_0  \end{equation}
where the vector $\vec{\gamma}$ has cell average zero and solves 
\begin{equation}\label{gammaeq} -\Delta_y \vec{\gamma}  =  2\nabla_y\chi(y) , \end{equation}
we see that the first equation of (\ref{eq:eps1}) is satisfied with $$u^{(1)}=v^{(1)}=0.$$ The second equation of (\ref{eq:eps1}) is also satisfied if we take $$v^{(2)}=v^{(3)}=0.$$ If we do this, to satisfy the first equation of (\ref{eq:eps2}) we can take 
\begin{equation} \label{eq:u4} u^{(4)}= \alpha(y) (\Delta_x +\tau)  (\Delta_x +\tau) u_0  + B_{ij}(y) (D^2_x)_{ij}  (\Delta_x +\tau) u_0\end{equation}
where the $\alpha(y)$ has cell average zero and solves 
\begin{equation}\label{alphaeq} -\Delta_y  \alpha  =  \chi , \end{equation}
and matrix $B(y)$ has components with cell average zero satisfying
\begin{equation} \label{Beq} -\Delta_y B_{ij} = 2{\partial\gamma_i\over{\partial y_j}}.\end{equation}
In (\ref{eq:u4}) Einstein summation notation is employed, with $D^2$ denoting the Hessian.  We find then to satisfy the second equation in (\ref{eq:eps2}) we need a nonzero $v^{(4)}$, and taking
\begin{equation}\label{eq:v4} v^{(4)} = \tau^2 \alpha(y) (\Delta_x +\tau) u_0  \end{equation}
will work. To summarize, we have thus far derived 
$$ u_\epsilon \approx u_0 +\epsilon^2 u^{(2)} +\epsilon^3 u^{(3)} +\epsilon^4 u^{(4)} \cdots $$
$$ v_\epsilon \approx v_0 + \epsilon^4 v^{(4)}+\cdots, $$
with $u^{(2)}$, $u^{(3)}$,$u^{(4)}$, $v_0$,  and $v^{(4)}$ given by (\ref{eq:u2}), (\ref{eq:u3}), (\ref{eq:u4}), (\ref{eq:v0}), and (\ref{eq:v4}) respectively. We need to emphasize, however, that beyond second order these choices are not necessarily optimal; there may be other third and fourth order terms necessary if one wanted estimates of higher order. 

\noindent
Our solutions $u_\epsilon$ and $u_0$ are in $H^2_0(D)$,  but due to the corrections, our approximation to $u_\epsilon$ is no longer in $H^2_0(D)$. Hence in order to obtain high enough order convergence estimates, will need the boundary corrector functions at each order. Let $\theta_\epsilon^{(n)}$ denote the unique $H^2(D)$ solution to 
\begin{eqnarray}   (\Delta_x +\tau) a(x/\epsilon) (\Delta +\tau) \theta_\epsilon^{(n)} +\tau^2\theta_\epsilon^{(n)} &=&  0 \ \ \ \mbox{in}\ \ D \label{thetaeps2} \\ 
\theta_\epsilon^{(n)} &=& -\epsilon u^{(n)} \ \ \mbox{on}\ \ \partial D \nonumber \\ {\partial  \theta_\epsilon^{(n)} \over{\partial\nu}}&=& -\epsilon {\partial u^{(n)}\over{\partial\nu}}  \ \ \mbox{on}\ \ \partial D,\nonumber\end{eqnarray}
and define its second order system counterpart 
\begin{equation}\label{eq:psieps} \psi_\epsilon^{(n)} = a(x/\epsilon) (\Delta +\tau) \theta_\epsilon^{(n)} .\end{equation}
Then the pair $(\theta^{(n)}_\epsilon, \psi^{(n)}_\epsilon)$ solves
\begin{eqnarray}\label{eq:uepssystem}  a(x/\epsilon) (\Delta +\tau )\theta^{(n)}_\epsilon  &=& \psi^{(n)}_\epsilon  \\ (\Delta +\tau )\psi^{(n)}_\epsilon  +\tau^2 \theta^{(n)}_\epsilon &=& 0. \nonumber\end{eqnarray}
\begin{remark} \label{rem} Since for our transmission eigenvalue problem $a= 1/(n-1)$, we have that 
\begin{equation}\label{betaeq2}  \Delta_y \chi(y) = {n(y) -\overline{n}\over{\overline{n}-1}} , \end{equation}
which means that $$\chi ={\beta \over{\overline{n}-1}},$$ where $\beta$ is the first order cell function from the homogenization of the standard transmission problem corresponding to $n$; which is $Y$-periodic, has cell average zero, and solves \begin{equation}\label{betaeq3}  \Delta_y \beta(y) = {n(y) -\overline{n}} , \end{equation}
see for example \cite{CaGuMoPa},\cite{CaMoPa}.
\end{remark} 
\subsection{Norm estimates}
The following Lemma will be useful for showing convergence estimates.
\begin{lemma} \label{systemlem} Assume that $z_\epsilon, \eta_\epsilon$ are in $H^2_0( D)$ and $L^2( D)$ respectively, and that they satisfy the second order system 
\begin{eqnarray}\label{eq:zepssystem}  a(x/\epsilon) (\Delta +\tau )z_\epsilon - \eta_\epsilon  &= & e \\ (\Delta +\tau )\eta_\epsilon  +\tau^2 z_\epsilon &=& f. \nonumber\end{eqnarray}
Then there exists $C$ independent of $\epsilon$ such that 
\begin{equation} \| z_\epsilon\|_{H^2_0( D)} \leq C (\| e \|_{L^2( D)} + \| f\|_{H^{-2}( D)}). \end{equation} 
\end{lemma}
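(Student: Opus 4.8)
The plan is to eliminate $\eta_\epsilon$ and reduce to the fourth-order operator $\mathbb{A}_{\tau,\epsilon}$, whose invertibility with $\epsilon$-independent coercivity constant is quoted from \cite{CaHa}. From the second equation, $\eta_\epsilon = (\Delta+\tau)^{-1}(f-\tau^2 z_\epsilon)$ in the appropriate weak sense; substituting into the first equation and applying $(\Delta+\tau)$ gives, after rearrangement, that $z_\epsilon$ is the variational solution in $H^2_0(D)$ of
\[
(\Delta+\tau)\,a(x/\epsilon)\,(\Delta+\tau) z_\epsilon + \tau^2 z_\epsilon = f + (\Delta+\tau)\!\left(\tfrac{1}{a}\, e\right),
\]
i.e. $\mathbb{A}_{\tau,\epsilon}$ applied to $z_\epsilon$ (in the sense of \eqref{Adef}, after composing with $(\Delta\Delta)^{-1}$) equals the right-hand side. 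Concretely, I would work directly with the bilinear form: test the system against $\phi\in H^2_0(D)$, integrate by parts to move $(\Delta+\tau)$ off $\eta_\epsilon$ onto $\phi$, and use the boundary conditions on $z_\epsilon$ and $\phi$ (both in $H^2_0(D)$) to kill boundary terms, arriving at
\[
\mathcal{A}_{\tau,\epsilon}(z_\epsilon,\phi) \;=\; \int_D e\,\Bigl(\tfrac{1}{a(x/\epsilon)}\bigr)(\Delta\phi+\tau\phi)\,dx \;+\; \langle f,\phi\rangle_{H^{-2}(D),H^2_0(D)} ,
\]
where $\langle f,\phi\rangle$ is interpreted via $f = \Delta\Delta g$ for some $g$, or simply as the action of $H^{-2}$ on $H^2_0$; the precise bookkeeping here is routine.

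Once this identity is in place, the estimate follows from coercivity: by \cite{CaHa}, $\mathcal{A}_{\tau,\epsilon}(u,u)\geq c_0\|u\|_{H^2_0(D)}^2$ with $c_0$ independent of $\epsilon$ and $\tau$. Setting $\phi=z_\epsilon$ and bounding the right-hand side, the first term is controlled by $\|a^{-1}\|_{L^\infty}\|e\|_{L^2(D)}\,(\|\Delta z_\epsilon\|_{L^2(D)} + \tau\|z_\epsilon\|_{L^2(D)}) \leq C\|e\|_{L^2(D)}\|z_\epsilon\|_{H^2_0(D)}$ using the Poincaré-type bound $\|z_\epsilon\|_{L^2}\le C\|z_\epsilon\|_{H^2_0}$ on $H^2_0(D)$; note $a^{-1}=n_\epsilon-1$ is uniformly bounded. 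The second term is bounded by $\|f\|_{H^{-2}(D)}\|z_\epsilon\|_{H^2_0(D)}$ by definition of the dual norm. Combining, $c_0\|z_\epsilon\|_{H^2_0(D)}^2 \leq C\bigl(\|e\|_{L^2(D)} + \|f\|_{H^{-2}(D)}\bigr)\|z_\epsilon\|_{H^2_0(D)}$, and dividing through yields the claim.

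The main obstacle is making the reduction to the fourth-order form rigorous at low regularity: $e\in L^2$ only and $f\in H^{-2}$, so the intermediate object $\eta_\epsilon$ lives merely in $L^2(D)$ and the manipulation "$\eta_\epsilon = a(\Delta+\tau)z_\epsilon - e$" combined with applying $(\Delta+\tau)$ must be understood distributionally; the cleanest route is to avoid pointwise elimination entirely and instead verify the weak identity above directly by choosing test functions and using density of $C^\infty_0(D)$ in $H^2_0(D)$, which sidesteps any regularity issues with $\eta_\epsilon$. A secondary point is to confirm that the pairing of $f$ with $\phi$ and of $e/a$ with $(\Delta\phi+\tau\phi)$ are both well-defined and bounded — the former because $\phi\in H^2_0(D)$ is exactly the space dual to $H^{-2}(D)$, the latter because $(\Delta\phi+\tau\phi)\in L^2(D)$ and $a^{-1}\in L^\infty$. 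Neither of these is deep; the argument is essentially Lax–Milgram applied to $\mathcal{A}_{\tau,\epsilon}$ with a carefully identified right-hand side.
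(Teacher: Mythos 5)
Your argument is essentially the paper's proof: the paper tests the first equation against $(\Delta+\tau)z_\epsilon$, integrates by parts to move $(\Delta+\tau)$ onto $z_\epsilon$ using the zero Cauchy data, substitutes the second equation, and concludes by coercivity and Cauchy--Schwarz --- exactly your weak identity specialized to $\phi=z_\epsilon$. One small slip: eliminating $\eta_\epsilon = a(x/\epsilon)(\Delta+\tau)z_\epsilon - e$ produces the source term $\int_D e\,(\Delta\phi+\tau\phi)\,dx$, not $\int_D (e/a)(\Delta\phi+\tau\phi)\,dx$; since $1/a = n_\epsilon-1$ is uniformly bounded this does not affect the final estimate.
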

\begin{proof}
Consider \begin{eqnarray} \int_ D a(\Delta + \tau)z_\epsilon (\Delta +\tau)z_\epsilon &=& \int_ D \eta_\epsilon (\Delta +\tau)z_\epsilon + \int_ D e(\Delta+\tau)z_\epsilon \nonumber \\ &=& \int_ D  (\Delta +\tau)\eta_\epsilon z_\epsilon + \int_ D e(\Delta+\tau)z_\epsilon \nonumber \\ 
&=&-\tau^2 \int_ D z_\epsilon z_\epsilon +\int_ D f z_\epsilon + \int_ D e(\Delta+\tau)z_\epsilon \nonumber \end{eqnarray}
where in the second line we integrated by parts and used the fact that $z_\epsilon$ has zero boundary data. Using ellipticity and Cauchy-Schwartz we have
 \begin{eqnarray} c\| z_\epsilon\|^2_{H^2_0( D)} &\leq& \int_ D a(\Delta + \tau)z_\epsilon (\Delta +\tau)z_\epsilon +\tau^2 \int_ D z_\epsilon z_\epsilon \nonumber \\ &= &\int_ D f z_\epsilon + \int_ D e(\Delta+\tau)z_\epsilon \nonumber \\ &\leq& \| f\|_{H^{-2}( D)} \| z_\epsilon\|_{H^2_0( D)} + \| e \|_{L^2( D)}   \| z_\epsilon\|_{H^2_0( D)} .\end{eqnarray}
Dividing through by $\| z_\epsilon\|_{H^2_0( D)}$ the result follows.
\end{proof}
\noindent
The next result gives us first order convergence in $\epsilon$, which we will need to show convergence of the operators. 
\begin{proposition}\label{h2est} Let $u_\epsilon, u_0 \in H^2_0( D)$  be the solutions to 
\begin{equation} \label{fourthordeqest}  (\Delta_x +\tau) a(x/\epsilon) (\Delta +\tau) u_\epsilon +\tau^2 u_\epsilon= h \end{equation}
and
\begin{equation} \label{homogeqest}  (\Delta_x +\tau) \overline{a^{-1}}^{-1} (\Delta +\tau) u_0 +\tau^2 u_0= h \end{equation}
respectively. Then 
$$ \| u_\epsilon -(u_0 +\epsilon^2 u^{(2)}  +\epsilon \theta_\epsilon^{(2)}) \|_{H_0^2( D)} \leq C\epsilon \| u_0 \|_{H^4( D)} $$
where $u^{(2)}$ is given by (\ref{eq:u2}) and where the boundary correction $\theta^{(2)}_\epsilon$ is defined by (\ref{thetaeps2}) for $n=2$. 
\end{proposition}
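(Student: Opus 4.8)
The plan is to estimate the error function $r_\epsilon := u_\epsilon - \big(u_0 + \epsilon^2 u^{(2)} + \epsilon\,\theta_\epsilon^{(2)}\big)$ by showing that it, together with an appropriate second-order companion, satisfies a system of the type handled by Lemma \ref{systemlem}, with right-hand sides of size $O(\epsilon)$ in the $L^2(D)\times H^{-2}(D)$ norm. First I would set up the second-order systems: write $v_\epsilon = a(x/\epsilon)(\Delta+\tau)u_\epsilon$ and $v_0 = \overline{a^{-1}}^{-1}(\Delta+\tau)u_0$ so that $(u_\epsilon,v_\epsilon)$ and $(u_0,v_0)$ solve the inhomogeneous systems \eqref{eq:uepssystem} and its homogenized counterpart. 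Recall from the formal calculation that $u^{(2)} = \chi(y)(\Delta_x+\tau)u_0$ and that $\chi$ solves \eqref{eq:chi}; the pair $(\theta_\epsilon^{(2)},\psi_\epsilon^{(2)})$ solves the homogeneous system in \eqref{eq:uepssystem}. The natural companion to $r_\epsilon$ is $s_\epsilon := v_\epsilon - \big(v_0 + \psi_\epsilon^{(2)}\big)$ (note there is no $\epsilon^2 v^{(2)}$ term since $v^{(2)}=0$ in the chosen ansatz).

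Next I would compute the system satisfied by $(r_\epsilon, s_\epsilon)$. Plugging the three contributions into $a(x/\epsilon)(\Delta+\tau)(\cdot)$ and into $(\Delta+\tau)(\cdot)+\tau^2(\cdot)$, the leading terms cancel by construction; the boundary corrector $\theta_\epsilon^{(2)}$ exactly kills the nonzero boundary data that $\epsilon^2 u^{(2)}$ carries on $\partial D$, so that $r_\epsilon\in H^2_0(D)$ as required for Lemma \ref{systemlem}. What remains on the right-hand sides are the "leftover" terms generated by the chain-rule expansion of $\Delta = \Delta_x + \tfrac{2}{\epsilon}\nabla_y\!\cdot\!\nabla_x + \tfrac{1}{\epsilon^2}\Delta_y$ acting on $u_0 + \epsilon^2 u^{(2)}$: the $\epsilon^{-2}$ and $\epsilon^{-1}$ and $\epsilon^0$ orders vanish by \eqref{eq:eps-2}–\eqref{eq:eps0} and the definition of $\chi$, while the first surviving contribution is $O(\epsilon)$ — concretely a term of the form $\epsilon\, a(y)\big(2\nabla_x\!\cdot\!\nabla_y u^{(3)} + \ldots\big)$ with $u^{(3)}$ absent from our truncation, so one gets an explicit residual of the form $\epsilon$ times $(\Delta_x+\tau)$- and $\nabla_x$-derivatives of $\chi(y)$ and of $(\Delta_x+\tau)u_0$, all of which are bounded in $L^2(D)$ by $C\epsilon\|u_0\|_{H^4(D)}$ using the $L^\infty$ bounds on $\chi,\nabla_y\chi$ (which follow from elliptic regularity on the torus for \eqref{eq:chi}). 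The second equation of the system produces, similarly, an $L^2$ — hence $H^{-2}$ — residual of the same order, coming from $\tau^2\epsilon^2 u^{(2)}$ and the mixed $y$-$x$ terms.

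Then I would invoke Lemma \ref{systemlem} with $e$ and $f$ these explicit residuals, obtaining $\|r_\epsilon\|_{H^2_0(D)} \le C(\|e\|_{L^2(D)} + \|f\|_{H^{-2}(D)}) \le C\epsilon\|u_0\|_{H^4(D)}$, which is exactly the claim. Two points need care. First, elliptic regularity for $u_0$: the homogenized equation \eqref{homogeqest} with $C^2$ boundary and $h = \Delta\Delta f$ must give $u_0\in H^4(D)$ so that $u^{(2)} = \chi(\Delta_x+\tau)u_0$ has its $H^2(D)$-norm (and the traces appearing in the boundary data for $\theta_\epsilon^{(2)}$) controlled by $\|u_0\|_{H^4(D)}$; this is standard but should be stated. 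Second — and this is the main obstacle — controlling the boundary corrector $\theta_\epsilon^{(2)}$ itself does \emph{not} require a limit, only an a priori bound: since $\theta_\epsilon^{(2)}$ solves the homogeneous fourth-order problem with boundary data $-\epsilon u^{(2)}$, $-\epsilon\,\partial_\nu u^{(2)}$, one writes $\theta_\epsilon^{(2)} = -\epsilon\,\widetilde{u^{(2)}} + \zeta_\epsilon$ with $\widetilde{u^{(2)}}$ a fixed $H^2(D)$ extension and $\zeta_\epsilon\in H^2_0(D)$, then applies Lemma \ref{systemlem} (in its inhomogeneous form, absorbing the extension into $e,f$) to get $\|\theta_\epsilon^{(2)}\|_{H^2_0(D) + \text{trace}} \le C\epsilon\|u_0\|_{H^4(D)}$ uniformly in $\epsilon$. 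This uniform bound is what lets $\epsilon\theta_\epsilon^{(2)}$ be a legitimate $O(\epsilon)$ term; the delicate question of its weak limit is deferred (as the introduction notes) and is not needed here. Assembling these bounds and the triangle inequality completes the proof.
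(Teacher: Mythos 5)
Your core argument is the paper's: set $z_\epsilon=u_\epsilon-(u_0+\epsilon^2u^{(2)}+\epsilon\theta_\epsilon^{(2)})$ together with a second-order companion, note that the boundary corrector restores the zero Cauchy data so that $z_\epsilon\in H^2_0(D)$, compute the residuals of the system \eqref{eq:zepssystem}, check that they are $O(\epsilon)$ with at most four derivatives of $u_0$, and invoke Lemma \ref{systemlem}. Two corrections are needed, however. First, the companion must be $\eta_\epsilon=v_\epsilon-(v_0+\epsilon\,\psi_\epsilon^{(2)})$, with a factor of $\epsilon$ on $\psi_\epsilon^{(2)}$: since $\psi_\epsilon^{(2)}=a(x/\epsilon)(\Delta+\tau)\theta_\epsilon^{(2)}$ and the corrector entering $z_\epsilon$ is $\epsilon\theta_\epsilon^{(2)}$, your choice $s_\epsilon=v_\epsilon-(v_0+\psi_\epsilon^{(2)})$ leaves an $O(1)$ term $(1-\epsilon)\psi_\epsilon^{(2)}$ in the first residual, and the argument breaks; with the extra $\epsilon$ the corrector pair drops out of both residuals exactly. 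Second, your closing claim that $\|\theta_\epsilon^{(2)}\|\leq C\epsilon\|u_0\|_{H^4(D)}$ uniformly in $\epsilon$ is false: the Dirichlet datum $-\epsilon\chi(x/\epsilon)(\Delta+\tau)u_0$ is only $O(\epsilon^{-1/2})$ in $H^{3/2}(\partial D)$ because tangential derivatives of $\chi(x/\epsilon)$ cost a factor $\epsilon^{-1}$, and the extension $\widetilde{u^{(2)}}$ is not a fixed ($\epsilon$-independent) function; indeed the paper's Corollary \ref{h2cor} obtains only $\|\theta_\epsilon^{(2)}\|_{H^2(D)}\leq C\epsilon^{-1/2}\|u_0\|_{H^4(D)}$, and the $O(1)$ bound in $L^2$ requires the separate duality argument of Lemma \ref{bcbounds}. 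Fortunately no bound on $\theta_\epsilon^{(2)}$ is needed for this proposition: $(\theta_\epsilon^{(2)},\psi_\epsilon^{(2)})$ solves the homogeneous system exactly and contributes nothing to $e$ and $f$, its only role being to place $z_\epsilon$ in $H^2_0(D)$. With the factor of $\epsilon$ restored and the last paragraph deleted, your proof coincides with the paper's.
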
 
\begin{proof} Let 
$$ z_\epsilon =  u_\epsilon -(u_0 +\epsilon^2 u^{(2)}  +\epsilon\theta_\epsilon^{(2)} ) $$
and $$ \eta_\epsilon = v_\epsilon -(v_0 + \epsilon  \psi_\epsilon^{(2)}  )$$
where $\psi_\epsilon^{(2)}$ is given by (\ref{eq:psieps}) and we recall that $v_0$ is given by (\ref{eq:v0}).  Thanks to the boundary corrections, $z_\epsilon\in H^2_0( D)$. We calculate 
\begin{equation} a(x/\epsilon) (\Delta +\tau )z_\epsilon - \eta_\epsilon  = -2\epsilon  a \nabla_x\cdot\nabla_y u^{(2)} - \epsilon^2 a(\Delta_x+\tau)u^{(2)}  \label{useforeta1} \end{equation}
and
\begin{equation} (\Delta +\tau )\eta_\epsilon  +\tau^2 z_\epsilon = -\epsilon^2 \tau^2 u^{(2)} .\label{useforeta2} \end{equation}
The residual contains derivatives of $u_0$ of fourth order or lower, and so the result follows from Lemma \ref{systemlem} . 
\end{proof}
\noindent
\begin{corollary} \label{h2cor} Let $u_\epsilon, u_0 \in H^2_0( D)$  be the solutions to 
\begin{equation} \label{fourthordeqest2}  (\Delta_x +\tau) a(x/\epsilon) (\Delta +\tau) u_\epsilon +\tau^2 u_\epsilon= h \end{equation}
and
\begin{equation} \label{homogeqest2}  (\Delta_x +\tau) \overline{a^{-1}}^{-1} (\Delta +\tau) u_0 +\tau^2 u_0= h \end{equation}
respectively. Then the boundary correction $\theta_\epsilon^{(2)}$  given by (\ref{thetaeps2}) with $n=2$ satisfies 
$$\| \theta_\epsilon^{(2)}\|_{H^2( D)}\leq C\epsilon^{-1/2}\| u_0\|_{H^4( D)}$$
and hence 
$$ \| u_\epsilon - u_0 -\epsilon^2 u^{(2)}\|_{H^2( D)} \leq C\epsilon^{1/2} \| u_0 \|_{H^4( D)} .$$
where $u^{(2)}$ is given by (\ref{eq:u2}). 
\end{corollary}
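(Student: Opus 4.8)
# Proof Proposal for Corollary \ref{h2cor}

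The plan is to bootstrap from the $H^2_0(D)$ estimate in Proposition \ref{h2est} by isolating the boundary corrector $\theta_\epsilon^{(2)}$ and estimating it directly through its defining boundary-value problem (\ref{thetaeps2}). The starting point is the triangle inequality applied to the decomposition from Proposition \ref{h2est}: writing $\epsilon \theta_\epsilon^{(2)} = \bigl(u_\epsilon - u_0 - \epsilon^2 u^{(2)}\bigr) - z_\epsilon$, we have $\epsilon \|\theta_\epsilon^{(2)}\|_{H^2(D)} \le \|u_\epsilon - u_0 - \epsilon^2 u^{(2)}\|_{H^2(D)} + \|z_\epsilon\|_{H^2_0(D)}$. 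The second term is $O(\epsilon)\|u_0\|_{H^4(D)}$ by Proposition \ref{h2est}, so the whole question reduces to estimating $\theta_\epsilon^{(2)}$ on its own.

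First I would estimate $\theta_\epsilon^{(2)}$ using an energy argument of the same flavor as Lemma \ref{systemlem}, but now the nonzero data sits on the boundary rather than in the interior. Since $\theta_\epsilon^{(2)}$ solves the homogeneous fourth-order equation (\ref{thetaeps2}) with Cauchy data $-\epsilon u^{(2)}$, $-\epsilon \partial u^{(2)}/\partial\nu$ on $\partial D$, I would subtract off an explicit lifting: choose $\Phi_\epsilon \in H^2(D)$ with $\Phi_\epsilon = -\epsilon u^{(2)}$, $\partial_\nu \Phi_\epsilon = -\epsilon \partial_\nu u^{(2)}$ on $\partial D$, constructed so that $\Phi_\epsilon$ is supported in an $O(\epsilon)$-collar of $\partial D$. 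Because $u^{(2)} = \chi(x/\epsilon)(\Delta_x+\tau)u_0$ has boundary trace bounded by $\|u_0\|_{H^4(D)}$ (the cell function $\chi$ being fixed and bounded), and because differentiating the $\epsilon^{-1}$-oscillation against an $\epsilon$-thin collar produces the characteristic loss, one gets $\|\Phi_\epsilon\|_{H^2(D)} \le C\epsilon^{1/2}\|u_0\|_{H^4(D)}$: the $\epsilon$ prefactor from the Dirichlet data competes with an $\epsilon^{-1/2}$ from squaring two $y$-derivatives over a region of measure $\epsilon$. Then $\theta_\epsilon^{(2)} - \Phi_\epsilon \in H^2_0(D)$ solves the system (\ref{eq:uepssystem})-type equations with right-hand sides coming from applying the operator to $\Phi_\epsilon$, which are themselves $O(\epsilon^{1/2})\|u_0\|_{H^4(D)}$ in $L^2(D)$ and $H^{-2}(D)$; Lemma \ref{systemlem} then closes the estimate and gives $\|\theta_\epsilon^{(2)}\|_{H^2(D)} \le C\epsilon^{-1/2}\|u_0\|_{H^4(D)}$.

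With the bound on $\theta_\epsilon^{(2)}$ in hand, the final inequality follows immediately: $\|u_\epsilon - u_0 - \epsilon^2 u^{(2)}\|_{H^2(D)} \le \|z_\epsilon\|_{H^2_0(D)} + \epsilon\|\theta_\epsilon^{(2)}\|_{H^2(D)} \le C\epsilon\|u_0\|_{H^4(D)} + C\epsilon \cdot \epsilon^{-1/2}\|u_0\|_{H^4(D)} = C\epsilon^{1/2}\|u_0\|_{H^4(D)}$, where the $\epsilon^{1/2}$ term dominates as $\epsilon \to 0$.

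I expect the main obstacle to be the construction and estimation of the boundary lifting $\Phi_\epsilon$ — specifically, verifying cleanly that one can realize the Cauchy data $-\epsilon u^{(2)}$, $-\epsilon\partial_\nu u^{(2)}$ by an $H^2$ function concentrated in an $\epsilon$-neighborhood of $\partial D$ with the sharp $\epsilon^{1/2}$ bound, using the $C^2$ regularity of $\partial D$ and a tubular-neighborhood coordinate system. The competing powers of $\epsilon$ (the $\epsilon$ from the smallness of the boundary data versus the $\epsilon^{-1}$ per tangential-or-normal derivative of the fast oscillation, integrated over a set of measure $\epsilon$) must be tracked carefully; this is exactly the standard source of the ubiquitous $\epsilon^{1/2}$ boundary-layer loss in periodic homogenization, and the point here is simply to confirm it survives in the fourth-order setting via the second-order system reformulation. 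Everything downstream is a routine application of the already-proved Lemma \ref{systemlem} and Proposition \ref{h2est}.
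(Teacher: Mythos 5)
Your overall strategy is the right one and is close in spirit to the paper's: isolate $\theta_\epsilon^{(2)}$, bound it by its Cauchy data, and finish with Proposition \ref{h2est} and the triangle inequality. The paper does the middle step differently — it estimates $\|\epsilon u^{(2)}\|_{H^{3/2}(\partial D)}$ and $\|\epsilon\,\partial u^{(2)}/\partial\nu\|_{H^{1/2}(\partial D)}$ by interpolation (each is $O(\epsilon^{-1/2})\|u_0\|_{H^4(D)}$, since the data is $O(\epsilon^{1-k})$ in $H^k(\partial D)$, respectively $O(\epsilon^{-k})$) and then invokes the uniform-in-$\epsilon$ elliptic well-posedness of the fourth-order Dirichlet problem, whereas you build an explicit collar lifting $\Phi_\epsilon$ and feed the residual into Lemma \ref{systemlem}. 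Your route is a legitimate, essentially equivalent way to prove the same elliptic estimate by hand.

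However, there is a concrete error in your exponent bookkeeping that, taken literally, breaks the argument: the claim $\|\Phi_\epsilon\|_{H^2(D)}\le C\epsilon^{1/2}\|u_0\|_{H^4(D)}$ is false. The Neumann datum $-\epsilon\,\partial_\nu u^{(2)}=-\big(\nabla_y\chi(x/\epsilon)\cdot\nu\big)(\Delta+\tau)u_0+O(\epsilon)$ is $O(1)$ in $L^2(\partial D)$ (the normal derivative of the fast variable cancels the prefactor $\epsilon$), so by the trace theorem no $H^2(D)$ lifting of this data can have norm tending to zero; because the data oscillates at scale $\epsilon$ it is in fact $O(\epsilon^{-1/2})$ in $H^{1/2}(\partial D)$. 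Counting correctly in the collar: two derivatives of $\epsilon\chi(x/\epsilon)(\Delta+\tau)u_0\,\rho(d(x)/\epsilon)$ are $O(\epsilon^{-1})$ pointwise on a set of measure $O(\epsilon)$, giving $\|\Phi_\epsilon\|_{H^2(D)}\le C\epsilon^{-1/2}\|u_0\|_{H^4(D)}$, and the residuals you feed into Lemma \ref{systemlem} are likewise $O(\epsilon^{-1/2})$ in $L^2(D)$, not $O(\epsilon^{1/2})$. With these corrections your argument closes and yields exactly $\|\theta_\epsilon^{(2)}\|_{H^2(D)}\le C\epsilon^{-1/2}\|u_0\|_{H^4(D)}$; as written, your intermediate bounds would instead ``prove'' $\|\theta_\epsilon^{(2)}\|_{H^2(D)}\le C\epsilon^{1/2}\|u_0\|_{H^4(D)}$, which is both inconsistent with your own stated conclusion and false in general (it would upgrade the corollary to an $O(\epsilon^{3/2})$ estimate, contradicting the genuine $\epsilon^{1/2}$ boundary-layer loss). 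The final paragraph's arithmetic is unaffected since only the correct $\epsilon^{-1/2}$ bound on $\theta_\epsilon^{(2)}$ is used there.
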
 
\begin{proof} From direct calculation of derivatives on the boundary and standard interpolation we get that $$ \| \epsilon u^{(2)} \|_{H^{3/2}(\partial D)}\leq C\epsilon^{-1/2} \| u_0 \|_{H^4( D)}$$ and $$ \| \epsilon{ \partial u^{(2)}\over{\partial\nu}} \|_{H^{1/2}(\partial D)}\leq C\epsilon^{-1/2} \| u_0 \|_{H^4( D)},$$ and so the bound on the boundary corrector follows from standard elliptic estimates. The result then follows from Proposition \ref{h2est}.
\end{proof}
\noindent
To get higher order estimates we need to use further terms in the asymptotic expansion. 
\begin{proposition}\label{h2est2} Let $u_\epsilon, u_0 \in H^2_0( D)$  be the solutions to 
\begin{equation} \label{fourthordeqest}  (\Delta_x +\tau) a(x/\epsilon) (\Delta +\tau) u_\epsilon +\tau^2 u_\epsilon= h \end{equation}
and
\begin{equation} \label{homogeqest}  (\Delta_x +\tau) \overline{a^{-1}}^{-1} (\Delta +\tau) u_0 +\tau^2 u_0= h \end{equation}
respectively. Then 
$$ \| u_\epsilon -(u_0 +\epsilon^2 u^{(2)}  +\epsilon\theta_\epsilon^{(2)}+ \epsilon^3 u^{(3)}  +\epsilon^2\theta_\epsilon^{(3)} ) \|_{H_0^2( D)} \leq C\epsilon^2 \| u_0 \|_{H^5( D)} $$
where $u^{(2)}$ and $u^{(3)}$are given by (\ref{eq:u2}) and (\ref{eq:u3}), and where the boundary corrections $\theta^{(n)}_\epsilon$ are defined by (\ref{thetaeps2}) for $n=2,3$. 
\end{proposition}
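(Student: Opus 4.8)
The plan is to follow the argument of Proposition~\ref{h2est}, now carrying two further terms in each two-scale expansion and closing the estimate with Lemma~\ref{systemlem}. Concretely, I would set
$$z_\epsilon = u_\epsilon - \bigl(u_0 + \epsilon^2 u^{(2)} + \epsilon\,\theta_\epsilon^{(2)} + \epsilon^3 u^{(3)} + \epsilon^2\theta_\epsilon^{(3)}\bigr),$$
$$\eta_\epsilon = v_\epsilon - \bigl(v_0 + \epsilon\,\psi_\epsilon^{(2)} + \epsilon^2\psi_\epsilon^{(3)}\bigr),$$
where $v_\epsilon := a(x/\epsilon)(\Delta+\tau)u_\epsilon$, $v_0$ is given by (\ref{eq:v0}), and $\psi_\epsilon^{(n)}$ by (\ref{eq:psieps}); note that, by the construction in Section~\ref{sec3}, there is no interior corrector for $v$ below order $\epsilon^4$, which is why $\eta_\epsilon$ only carries the two boundary terms. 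Since $\epsilon^2 u^{(2)}+\epsilon\,\theta_\epsilon^{(2)}$ and $\epsilon^3 u^{(3)}+\epsilon^2\theta_\epsilon^{(3)}$ each have vanishing Cauchy data on $\partial D$ — which is precisely what (\ref{thetaeps2}) enforces — we have $z_\epsilon\in H^2_0(D)$, so Lemma~\ref{systemlem} applies to the pair $(z_\epsilon,\eta_\epsilon)$.

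The next step is to substitute $(z_\epsilon,\eta_\epsilon)$ into the system (\ref{eq:zepssystem}) and compute the residuals $e = a(x/\epsilon)(\Delta+\tau)z_\epsilon - \eta_\epsilon$ and $f = (\Delta+\tau)\eta_\epsilon + \tau^2 z_\epsilon$, expanding $\Delta = \Delta_x + \tfrac{2}{\epsilon}\nabla_x\!\cdot\!\nabla_y + \tfrac{1}{\epsilon^2}\Delta_y$ on the two-scale terms. In $e$, the $O(1)$ contribution collapses to $v_0$ by the cell identity $a + a\Delta_y\chi = \overline{a^{-1}}^{-1}$ (from (\ref{eq:chi})) together with (\ref{eq:v0}); the boundary-corrector terms drop out identically because $(\theta^{(n)}_\epsilon,\psi^{(n)}_\epsilon)$ solves the homogeneous system $a(\Delta+\tau)\theta^{(n)}_\epsilon=\psi^{(n)}_\epsilon$ and $v_\epsilon$ cancels $a(\Delta+\tau)u_\epsilon$; and — the one genuinely new ingredient compared with Proposition~\ref{h2est} — the $O(\epsilon)$ contribution, which was the source of the $\epsilon^1$ error there, now vanishes: with $u^{(2)}=\chi(\Delta_x+\tau)u_0$ and $u^{(3)}=\vec\gamma\cdot\nabla_x(\Delta_x+\tau)u_0$ one has $\nabla_x\!\cdot\!\nabla_y u^{(2)} = \nabla_y\chi\cdot\nabla_x(\Delta_x+\tau)u_0$ and $\Delta_y u^{(3)} = (\Delta_y\vec\gamma)\cdot\nabla_x(\Delta_x+\tau)u_0 = -2\,\nabla_y\chi\cdot\nabla_x(\Delta_x+\tau)u_0$ by (\ref{gammaeq}), so that $2a\,\nabla_x\!\cdot\!\nabla_y u^{(2)} + a\,\Delta_y u^{(3)} = 0$. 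What is left is
$$e = -\epsilon^2\bigl(a(\Delta_x+\tau)u^{(2)} + 2a\,\nabla_x\!\cdot\!\nabla_y u^{(3)}\bigr) - \epsilon^3\, a(\Delta_x+\tau)u^{(3)},$$
a finite sum of products of bounded periodic cell functions ($a\chi$, $a\,\nabla_y\vec\gamma$, $a\vec\gamma$) with $x$-derivatives of $u_0$ of order at most five — the order-five term carrying the extra power $\epsilon^3$ rather than $\epsilon^2$ — so $\|e\|_{L^2(D)}\le C\epsilon^2\|u_0\|_{H^5(D)}$. Likewise, using $(\Delta+\tau)v_\epsilon+\tau^2u_\epsilon=h$, the identity $(\Delta_x+\tau)v_0+\tau^2u_0=h$ equivalent to (\ref{homogeqest}), and $(\Delta+\tau)\psi^{(n)}_\epsilon+\tau^2\theta^{(n)}_\epsilon=0$, one finds $f = -\epsilon^2\tau^2 u^{(2)} - \epsilon^3\tau^2 u^{(3)}$, hence $\|f\|_{H^{-2}(D)}\le\|f\|_{L^2(D)}\le C\epsilon^2\|u_0\|_{H^3(D)}$.

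Lemma~\ref{systemlem} applied to $(z_\epsilon,\eta_\epsilon)$ with these $e$ and $f$ then gives $\|z_\epsilon\|_{H^2_0(D)}\le C(\|e\|_{L^2(D)}+\|f\|_{H^{-2}(D)})\le C\epsilon^2\|u_0\|_{H^5(D)}$, which is the assertion. The part that requires genuine care — the analogue of the main obstacle here — is verifying the order-$\epsilon$ cancellation in $e$: one must check that the precise choice (\ref{eq:u3}) of $u^{(3)}$ built from the cell corrector $\vec\gamma$ of (\ref{gammaeq}) is exactly what annihilates the first-order residual, with everything else being the routine chain-rule bookkeeping. It is also worth recording, as a minor point, that $u_0\in H^5(D)$ by elliptic regularity for (\ref{homogeqest}) when $h$ is sufficiently regular, and that $\chi$, $\vec\gamma$ and $\nabla_y\vec\gamma$ belong to $L^\infty(Y)$ automatically when $d\le 3$, by the Sobolev embeddings $H^2_\sharp(Y)\hookrightarrow C^0$ and $H^3_\sharp(Y)\hookrightarrow C^1$, so that every term entering $e$ and $f$ is well defined and of the asserted size.
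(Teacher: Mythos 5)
Your proof is correct and follows essentially the same route as the paper: the same choice of $z_\epsilon$ and $\eta_\epsilon$, the same residual computation closed by Lemma~\ref{systemlem}, with the added (welcome) explicit verification that the order-$\epsilon$ terms $2a\,\nabla_x\!\cdot\!\nabla_y u^{(2)}+a\,\Delta_y u^{(3)}$ cancel via (\ref{gammaeq}). The only discrepancy is that your second residual $f=-\epsilon^2\tau^2u^{(2)}-\epsilon^3\tau^2u^{(3)}$ is actually the complete one --- the paper's displayed formula omits the harmless $-\epsilon^3\tau^2u^{(3)}$ term --- so your version is, if anything, slightly more careful.
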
 
\begin{proof} Let 
$$ z_\epsilon =  u_\epsilon -(u_0 +\epsilon^2 u^{(2)}  +\epsilon \theta_\epsilon^{(2)} +\epsilon^3  u^{(3)}  +\epsilon^2\theta_\epsilon^{(3)}  ) $$
and $$ \eta_\epsilon = v_\epsilon -(v_0 + \epsilon  \psi_\epsilon^{(2)}+  \epsilon^2  \psi_\epsilon^{(3)} )$$
where $\psi_\epsilon^{(2)}$ is given by (\ref{eq:psieps}) and we recall that $v_0$ is given by (\ref{eq:v0}).  Thanks to the boundary corrections, $z_\epsilon\in H^2_0( D)$. We calculate 
\begin{equation} a(x/\epsilon) (\Delta +\tau )z_\epsilon - \eta_\epsilon  = -2\epsilon^2  a \nabla_x\cdot\nabla_y u^{(3)} - \epsilon^2 a(\Delta_x+\tau)u^{(2)}   -\epsilon^3 a(\Delta_x+\tau)u^{(3)}\end{equation}
and
\begin{equation} (\Delta +\tau )\eta_\epsilon  +\tau^2 z_\epsilon = -\epsilon^2 \tau^2 u^{(2)} .\end{equation}
Again the residual contains up to fifth derivatives of $u_0$, and so the result follows again from Lemma \ref{systemlem}. 
\end{proof}
\noindent
We have that the boundary corrector terms, $\epsilon^{n-1}\theta_\epsilon^{(n)}$, with $\theta_\epsilon^{(n)}$ given by (\ref{thetaeps2}), are of order $\epsilon^{n-1}$ in general. 
\begin{lemma}\label{bcbounds} The boundary correctors $\theta_\epsilon^{(n)}$ given by (\ref{thetaeps2})  for $n=2,3$ satisfy 
$$\| \theta_\epsilon^{(n)}\|_{L^2( D)} \leq C\| u_0 \|_{H^{n+2}( D)} $$
where $C$ is independent of $\epsilon$ and $u_0$. \end{lemma}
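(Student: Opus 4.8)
\noindent The plan is an Aubin--Nitsche duality argument in which the oscillatory Cauchy data of $\theta_\epsilon^{(n)}$ is first removed by a lifting supported in the $O(\epsilon)$-collar $S_\epsilon:=\{x\in D:\operatorname{dist}(x,\partial D)<\epsilon\}$, so that the correction lands in $H^2_0(D)$; the point is that, although $\theta_\epsilon^{(n)}$ is only $O(\epsilon^{-1/2})$ in $H^2(D)$ by Corollary~\ref{h2cor}, its $L^2$ norm stays bounded, the gain coming from cancellation revealed by testing against the resolvent. Two preliminary facts are used throughout. First, since $a=1/(n-1)\in L^\infty_\sharp(Y)$, interior elliptic regularity on the torus applied to \eqref{eq:chi} and then \eqref{gammaeq} gives $\chi\in C^{1,\alpha}_\sharp(Y)$ and $\vec\gamma\in C^{2,\alpha}_\sharp(Y)$; hence $\chi,\nabla_y\chi,\vec\gamma,\nabla_y\vec\gamma$ are bounded in $L^\infty(Y)$ and $D^2_y\chi,D^2_y\vec\gamma$ in $L^2(Y)$, uniformly in $\epsilon$. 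Second, for $d\le 3$ we have $H^2(D)\hookrightarrow L^\infty(D)$, so $(\Delta+\tau)u_0\in L^\infty(D)$ (resp.\ $\nabla(\Delta+\tau)u_0\in L^\infty(D)$) with norm controlled by $\|u_0\|_{H^4(D)}$ (resp.\ $\|u_0\|_{H^5(D)}$); and, by \cite{CaHa}, $\mathcal A_{\tau,\epsilon}$ is coercive on $H^2_0(D)$ with constant independent of $\epsilon$.

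\noindent I would first set $\ell_\epsilon(x):=-\epsilon\,u^{(n)}(x)\,\rho(\operatorname{dist}(x,\partial D)/\epsilon)$ with $\rho\in C^\infty([0,\infty))$, $\rho(0)=1$, $\rho'(0)=0$, $\operatorname{supp}\rho\subset[0,1)$. Then $\ell_\epsilon$ is supported in $S_\epsilon$ and carries the Cauchy data $-\epsilon u^{(n)},-\epsilon\partial_\nu u^{(n)}$ of $\theta_\epsilon^{(n)}$ on $\partial D$, so $\vartheta_\epsilon:=\theta_\epsilon^{(n)}-\ell_\epsilon\in H^2_0(D)$. Using the facts above together with the elementary bound $\|w(\cdot/\epsilon)\|_{L^2(S_\epsilon)}\le C\epsilon^{1/2}\|w\|_{L^2(Y)}$ for $Y$-periodic $w$, the same computation as in Corollary~\ref{h2cor} yields $\|\ell_\epsilon\|_{L^2(D)}\le C\epsilon^{3/2}\|u_0\|_{H^{n+2}(D)}$ and $\|\ell_\epsilon\|_{H^2(D)}\le C\epsilon^{-1/2}\|u_0\|_{H^{n+2}(D)}$, hence also $\|(\Delta+\tau)\ell_\epsilon\|_{L^2(S_\epsilon)}\le C\epsilon^{-1/2}\|u_0\|_{H^{n+2}(D)}$.

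\noindent Next, for $g\in L^2(D)$ let $\zeta_\epsilon\in H^2_0(D)$ solve $\mathcal A_{\tau,\epsilon}(\zeta_\epsilon,\phi)=(g,\phi)_{L^2(D)}$ for all $\phi\in H^2_0(D)$, i.e.\ $(\Delta+\tau)a(x/\epsilon)(\Delta+\tau)\zeta_\epsilon+\tau^2\zeta_\epsilon=g$ in $D$, with $\|\zeta_\epsilon\|_{H^2_0(D)}\le C\|g\|_{L^2(D)}$ uniformly in $\epsilon$. Since $\mathcal A_{\tau,\epsilon}$ is symmetric and $\theta_\epsilon^{(n)}$ solves the homogeneous equation (\ref{thetaeps2}), an integration by parts — legitimate because $\zeta_\epsilon=\partial_\nu\zeta_\epsilon=0$ on $\partial D$ even though $\theta_\epsilon^{(n)}\notin H^2_0(D)$ — gives $\mathcal A_{\tau,\epsilon}(\zeta_\epsilon,\theta_\epsilon^{(n)})=0$ and therefore
\begin{equation*}
(g,\vartheta_\epsilon)_{L^2(D)}=\mathcal A_{\tau,\epsilon}(\zeta_\epsilon,\vartheta_\epsilon)=-\mathcal A_{\tau,\epsilon}(\zeta_\epsilon,\ell_\epsilon)=-\int_{S_\epsilon}a(x/\epsilon)(\Delta+\tau)\ell_\epsilon\,(\Delta+\tau)\zeta_\epsilon\,dx-\tau^2\int_{S_\epsilon}\ell_\epsilon\,\zeta_\epsilon\,dx,
\end{equation*}
the integrals collapsing to $S_\epsilon$ because $\operatorname{supp}\ell_\epsilon\subset S_\epsilon$. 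With the bounds on $\ell_\epsilon$ above this gives $|(g,\vartheta_\epsilon)_{L^2(D)}|\le C\epsilon^{-1/2}\|u_0\|_{H^{n+2}(D)}\|(\Delta+\tau)\zeta_\epsilon\|_{L^2(S_\epsilon)}+C\epsilon^{3/2}\|u_0\|_{H^{n+2}(D)}\|g\|_{L^2(D)}$, so the whole estimate reduces to the collar bound $\|(\Delta+\tau)\zeta_\epsilon\|_{L^2(S_\epsilon)}\le C\epsilon^{1/2}\|g\|_{L^2(D)}$.

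\noindent This last inequality is the crux, and it is where the homogenization structure genuinely enters. Writing $\sigma_\epsilon:=a(x/\epsilon)(\Delta+\tau)\zeta_\epsilon$ for the flux, so $(\Delta+\tau)\zeta_\epsilon=(n_\epsilon-1)\sigma_\epsilon$ with $n_\epsilon-1\in L^\infty$, I would apply Proposition~\ref{h2est} and Corollary~\ref{h2cor} to the problem solved by $\zeta_\epsilon$ — whose homogenized solution $\zeta_0$ (solving $(\Delta+\tau)\overline{a^{-1}}^{-1}(\Delta+\tau)\zeta_0+\tau^2\zeta_0=g$) satisfies $\|\zeta_0\|_{H^4(D)}\le C\|g\|_{L^2(D)}$ by elliptic regularity for the constant-coefficient operator — and read off from the associated second-order system in the proof of Proposition~\ref{h2est}, using only the crude $O(\epsilon^{-1/2})$ $H^2$-bound of Corollary~\ref{h2cor} on that problem's own boundary corrector, the flux estimate $\|\sigma_\epsilon-\overline{a^{-1}}^{-1}(\Delta+\tau)\zeta_0\|_{L^2(D)}\le C\epsilon^{1/2}\|g\|_{L^2(D)}$. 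Since $(\Delta+\tau)\zeta_0\in H^2(D)\hookrightarrow L^\infty(D)$, the homogenized flux $v_0:=\overline{a^{-1}}^{-1}(\Delta+\tau)\zeta_0$ is bounded on $\overline D$, and therefore, using $|S_\epsilon|\le C\epsilon$,
\begin{equation*}
\|(\Delta+\tau)\zeta_\epsilon\|_{L^2(S_\epsilon)}\le \|n_\epsilon-1\|_{L^\infty}\big(\|v_0\|_{L^\infty(D)}|S_\epsilon|^{1/2}+\|\sigma_\epsilon-v_0\|_{L^2(D)}\big)\le C\epsilon^{1/2}\|g\|_{L^2(D)}.
\end{equation*}
Feeding this back gives $\|\vartheta_\epsilon\|_{L^2(D)}\le C\|u_0\|_{H^{n+2}(D)}$, hence $\|\theta_\epsilon^{(n)}\|_{L^2(D)}\le\|\vartheta_\epsilon\|_{L^2(D)}+\|\ell_\epsilon\|_{L^2(D)}\le C\|u_0\|_{H^{n+2}(D)}$. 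I expect the main obstacle to be precisely this collar estimate: the trivial bound $\|(\Delta+\tau)\zeta_\epsilon\|_{L^2(S_\epsilon)}\le\|(\Delta+\tau)\zeta_\epsilon\|_{L^2(D)}$ merely reproduces Corollary~\ref{h2cor}, and the compensating $\epsilon^{1/2}$ is available only because the flux $\sigma_\epsilon$ homogenizes, in $L^2(D)$, to a \emph{bounded} function. The case $n=3$ is identical, with $\vec\gamma$ in place of $\chi$ and $\|u_0\|_{H^5(D)}$ in place of $\|u_0\|_{H^4(D)}$.
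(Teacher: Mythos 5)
Your argument is correct, and it reaches the bound by a route that differs from the paper's in its technical packaging, though both are duality arguments against the resolvent of the same $\epsilon$-dependent operator with $L^2$ right-hand side. The paper keeps $\theta_\epsilon^{(n)}$ as is, applies Green's identity twice to reduce $\int_D\theta_\epsilon^{(n)}f$ to boundary integrals pairing the oscillatory Cauchy data of $\theta_\epsilon^{(n)}$ against the co-normal data of the dual solution $w_\epsilon$, and then estimates those pairings in $H^{\pm 1/2}(\partial D)\times H^{\mp 3/2}(\partial D)$ using the trace theorem for $L^2(D,\Delta)$ (Appendix B) together with the expansion of the dual flux from Proposition~\ref{h2est}; the $O(1)$ outcome appears there as the cancellation $\epsilon^2\cdot\epsilon^{-3/2}\cdot\epsilon^{-1/2}$ in the terms $II$ and $IV$. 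You instead absorb the Cauchy data into a cutoff lifting $\ell_\epsilon$ supported in the $O(\epsilon)$ collar, so that everything reduces to a volume integral over $S_\epsilon$, and the same cancellation appears as the collar flux bound $\|(\Delta+\tau)\zeta_\epsilon\|_{L^2(S_\epsilon)}\leq C\epsilon^{1/2}\|g\|_{L^2(D)}$, obtained from $\|\sigma_\epsilon-\overline{a^{-1}}^{-1}(\Delta+\tau)\zeta_0\|_{L^2(D)}\leq C\epsilon^{1/2}\|g\|_{L^2(D)}$ (which indeed follows from the system identities in the proof of Proposition~\ref{h2est} plus Corollary~\ref{h2cor} applied to the dual problem, exactly the ingredients the paper itself invokes) together with $(\Delta+\tau)\zeta_0\in L^\infty$ and $|S_\epsilon|\leq C\epsilon$. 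What your version buys is the complete avoidance of the negative-order trace machinery for $L^2(D,\Delta)$ and of the $H^{3/2}(\partial D)$ norms of the oscillatory data; the price is the extra regularity bookkeeping for the lifting ($\chi,\nabla_y\chi,\vec\gamma,\nabla_y\vec\gamma\in L^\infty$, $(\Delta+\tau)u_0\in L^\infty$, a $C^2$ distance function near $\partial D$), all of which is available under the paper's hypotheses and is in any case implicitly used in the paper's own estimates of $\|\chi(x/\epsilon)(\Delta+\tau)u_0\|_{H^{3/2}(\partial D)}$. One point you should state rather than assert: the identity $\mathcal A_{\tau,\epsilon}(\zeta_\epsilon,\theta_\epsilon^{(n)})=0$ requires the Green's identity between $\zeta_\epsilon\in H^2_0(D)$ and $\psi^{(n)}_\epsilon=a(x/\epsilon)(\Delta+\tau)\theta_\epsilon^{(n)}\in L^2(D,\Delta)$, which is legitimate precisely by the duality definition of traces in Appendix B — so you have not escaped that appendix entirely, only its quantitative use.
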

\begin{proof}  We prove this for $n=2$, the proof for $n=3$ follows in the same way. 
Given any $f\in L^2( D)$, consider the solution $w_\epsilon\in H^2_0( D)$ of 
\begin{eqnarray}  \label{wepseq} (\Delta_x +\tau) a(x/\epsilon) (\Delta +\tau) w_\epsilon +\tau^2 w_\epsilon&=& f \ \ \ \mbox{in}\ \  D \\ 
w_\epsilon &=& 0 \ \ \mbox{on}\ \ \partial D \nonumber \\ {\partial w_\epsilon} \over{\partial\nu}&=& 0 \ \ \mbox{on}\ \ \partial D.\end{eqnarray}
Using the equations for $\theta_\epsilon^{(2)}$ and $w_\epsilon$ and the second Green's identity twice,  we obtain
\begin{eqnarray}
\int_ D \theta_\epsilon^{(2)} f &=&\int_{ D} \left[(\Delta_x +\tau) a(x/\epsilon) (\Delta +\tau) w_\epsilon +\tau^2 w_\epsilon\right] \theta_\epsilon^{(2)}\,dx\nonumber\\
&=& \int_{\partial  D}\theta_\epsilon^{(2)} \, \frac{\partial}{\partial\nu} \left(a(x/\epsilon) (\Delta +\tau) w_\epsilon\right)- a(x/\epsilon) (\Delta +\tau) w_\epsilon \,\frac{\partial \theta_\epsilon^{(2)}}{\partial \nu} \, ds.
\end{eqnarray}
The boundary conditions for $\theta_\epsilon^{(2)}$  then yield
\begin{multline}\label{eqthetaafterbcs}
\int_ D \theta_\epsilon^{(2)} f =  \int_{\partial  D}\epsilon\chi(x/\epsilon)(\Delta +\tau)u_0 \, \frac{\partial}{\partial\nu}\left(  a(x/\epsilon) (\Delta +\tau) w_\epsilon\right)\\ - \int_{\partial D} \epsilon a(x/\epsilon) (\Delta +\tau) w_\epsilon \,\frac{\partial}{\partial\nu} \left(\chi(x/\epsilon)(\Delta +\tau)u_0\right) \, ds.
\end{multline}
From Proposition \ref{h2est} applied to the homogenization problem for $w_\epsilon$, we get have that 
\begin{equation}\label{l2estlapwiththeta} \| w_\epsilon - w_0 -\epsilon^2 w^{(2)} - \epsilon \theta^f_\epsilon \|_{H^2( D)} \leq C\epsilon \| w_0\|_{H^4( D)} \end{equation}
where $w_0$ is the homogenized solution for (\ref{wepseq}), $w^{(2)}$ is the corresponding bulk correction, and $\theta^f_\epsilon$ is its corresponding boundary corrector (for order $n=2$). 
From line (\ref{useforeta1}) in the proof of the same Proposition, we have that $\eta_\epsilon$ is $O(\epsilon)$ in $L^2$. Likewise, $ \Delta \eta_\epsilon $  is also  bounded by the same right hand side in $L^2(D)$ from (\ref{useforeta2}).  Hence we have that 
$$ \|  \eta_\epsilon \|_{L^2(D,\Delta)} \leq  C\epsilon \| w_0\|_{H^4( D)}, $$
where we use the space (see for example \cite{Va})  $$L^2(D, \Delta) = \{ v \in L^2(D)| \Delta v\in L^2(D)\}  $$ with norm $$ \| v\|_{L^2(D,\Delta)} = \|v\|_{L^2(D)} + \| \Delta v \|_{L^2(D)}.$$ It is known, (see Appendix \ref{trace}), that $L^2(D,\Delta)$ has bounded boundary traces in $H^{-1/2}$ and bounded normal derivative boundary traces in $H^{-3/2}$. From this and the proof of Proposition \ref{h2est} we can conclude that 
\begin{equation}\label{hminushalfest} \|a (\Delta+\tau)w_\epsilon -\overline{a^{-1}}^{-1}(\Delta +\tau)w_0 -\epsilon a (\Delta+\tau)\theta^f_\epsilon \|_{H^{-1/2}(\partial D)} \leq C\epsilon \| w_0\|_{H^4( D)} \end{equation} and 
\begin{equation}\label{hminus3halfest} \| {\partial\over{\partial\nu}} (a(\Delta+\tau)w_\epsilon) -\overline{a^{-1}}^{-1}{\partial\over{\partial\nu}}(\Delta +\tau)w_0 -\epsilon {\partial\over{\partial\nu}} (a (\Delta+\tau)\theta^f_\epsilon) \|_{H^{-3/2}(\partial D)} \leq C\epsilon \| w_0\|_{H^4( D)}. \end{equation}
Thanks to these estimates, we can replace the $w_\epsilon$ terms in (\ref{eqthetaafterbcs}), with the remainder bounded by $$C \epsilon^2 \| \chi(x/\epsilon)(\Delta + \tau) u_0 \|_{H^{3/2}(\partial D)}   \| w_0\|_{H^4(D)}$$ for the first term and  $$ C \epsilon^2 \| {\partial\over{\partial\nu}}\chi(x/\epsilon)(\Delta + \tau) u_0 \|_{H^{1/2}(\partial D)}  \| w_0\|_{H^4(D)} $$  for the second term. Both of these are bounded by $ C\epsilon^{1/2} \| u_0\|_{H^4(D)} \| w_0\|_{H^4(D)}$, where we abuse notation and continue to use $C$ for the constant.  Since we have assumed that $D$ is smooth, we use the standard elliptic estimate that $$\| w_0\|_{H^4(D)}\leq C \| f \|_{L^2(D)},$$ so the remainder  is bounded by $C\epsilon^{1/2}\| f \|_{L^2(D)} \| u_0\| _{H^4(D)}$. Hence (\ref{eqthetaafterbcs}) becomes 
\begin{multline}\label{eqthetaafterreplacement}
\int_ D \theta_\epsilon^{(2)} f =  \int_{\partial  D}\epsilon\chi(x/\epsilon)(\Delta +\tau)u_0 \, \frac{\partial}{\partial\nu}\left(  \overline{a^{-1}}^{-1} (\Delta +\tau) w_0\right)ds\\ \int_{\partial  D}\epsilon^2\chi(x/\epsilon)(\Delta +\tau)u_0 \, \frac{\partial}{\partial\nu}\left(  a(x/\epsilon) (\Delta +\tau) \theta^f_\epsilon\right)ds \\ - \int_{\partial D} \epsilon  \overline{a^{-1}}^{-1}  (\Delta +\tau) w_0\,\frac{\partial}{\partial\nu} \left(\chi(x/\epsilon)(\Delta +\tau)u_0\right) \, ds \\- \int_{\partial D} \epsilon^2 a(x/\epsilon) (\Delta +\tau) \theta^f_\epsilon \,\frac{\partial}{\partial\nu} \left(\chi(x/\epsilon)(\Delta +\tau)u_0\right) \, ds +o(1) \\ = I+II +III +IV +o(1)
\end{multline}
where the tail is bounded in absolute value by $C\epsilon^{1/2}\| f \|_{L^2(D)} \| u_0\| _{H^4(D)}$. The first term $I$ is clearly bounded by the same, as it in fact goes to zero $O(\epsilon)$. For the third term $III$, the normal derivative produces a $1/\epsilon$ when applied to $\chi$, which cancels with the $\epsilon$, yielding that the absolute value of $III$ is bounded by $C\| f \|_{L^2(D)} \| u_0\| _{H^4(D)}$.  For the other two terms, we note that $ a(x/\epsilon) (\Delta +\tau) \theta^f_\epsilon $ is bounded by $ C\epsilon^{-1/2} \| w_0 \|_{H^4(D)}$ in $L^2(D,\Delta)$ by Corollary \ref{h2cor} and the equation for $\theta^f_\epsilon$.  Hence we have by trace estimates for $L^2(D,\Delta)$  (see for example Appendix \ref{trace})  that $$ \| a(x/\epsilon) (\Delta +\tau) \theta^f_\epsilon \|_{H^{-1/2}(\partial D)} \leq C\epsilon^{-1/2} \| f \|_{L^2(D)} $$ and
$$ \| {\partial\over{\partial\nu}} a(x/\epsilon) (\Delta +\tau) \theta^f_\epsilon \|_{H^{-3/2}(\partial D)} \leq C\epsilon^{-1/2} \| f \|_{L^2(D)}. $$
Using the duality pairing, 
\begin{multline} | II | \leq  \epsilon^2 \| \chi(\Delta +\tau)u_0 \|_{H^{3/2}(\partial D)}  \| {\partial\over{\partial\nu}} a(x/\epsilon) (\Delta +\tau) \theta^f_\epsilon \|_{H^{-3/2}(\partial D)} \\ \leq  C\epsilon^2 \epsilon^{-3/2} \| u_0\|_{H^4(D)} \epsilon^{-1/2} \| f \|_{L^2(D)} \end{multline} 
and \begin{multline}  | IV | \leq  \epsilon^2 \| {\partial\over{\partial\nu}} \chi(\Delta +\tau)u_0 \|_{H^{1/2}(\partial D)}  \| a(x/\epsilon) (\Delta +\tau) \theta^f_\epsilon \|_{H^{-1/2}(\partial D)} \\ \leq  C\epsilon^2 \epsilon^{-3/2} \| u_0\|_{H^4(D)} \epsilon^{-1/2} \| f \|_{L^2(D)} \end{multline} 
from which we can conclude that 
$$ \left| \int_ D \theta_\epsilon^{(2)} f \right| \leq C\| u_0\|_{H^4(D)}  \| f \|_{L^2(D)}$$
from which the result follows. 
 \end{proof}
\begin{corollary} \label{l2estexpansion} Let $u_\epsilon, u_0 \in H^2_0( D)$  be the solutions to 
\begin{equation} \label{fourthordeqest2}  (\Delta_x +\tau) a(x/\epsilon) (\Delta +\tau) u_\epsilon +\tau^2 u_\epsilon= h \end{equation}
and
\begin{equation} \label{homogeqest2}  (\Delta_x +\tau) \overline{a^{-1}}^{-1} (\Delta +\tau) u_0 +\tau^2 u_0= h \end{equation}
respectively. Then 
$$ \| u_\epsilon -(u_0 + \epsilon\theta_\epsilon^{(2)} ) \|_{L^2( D)} \leq C\epsilon^2 \| u_0 \|_{H^5( D)} $$
and 
$$ \| u_\epsilon - u_0 \|_{L^2( D)} \leq C\epsilon \| u_0 \|_{H^4( D)} .$$
where the boundary correction $\theta_\epsilon^{(2)}$is given by (\ref{thetaeps2}) with $n=2$. 
\end{corollary}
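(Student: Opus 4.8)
The plan is to reduce the $L^2$ estimate to the $H^2_0$-estimate of Proposition \ref{h2est2} (which already controls $u_\epsilon$ to order $\epsilon^2$ in $H^2_0$, hence in $L^2$) together with the $L^2$-bounds on the interior boundary correctors from Lemma \ref{bcbounds}. Concretely, start from the expansion in Proposition \ref{h2est2},
$$ \| u_\epsilon -(u_0 +\epsilon^2 u^{(2)} +\epsilon\theta_\epsilon^{(2)}+ \epsilon^3 u^{(3)} +\epsilon^2\theta_\epsilon^{(3)} ) \|_{H_0^2( D)} \leq C\epsilon^2 \| u_0 \|_{H^5( D)}, $$
and note that $H^2_0(D)\hookrightarrow L^2(D)$ continuously. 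It then remains to absorb the extra terms $\epsilon^2 u^{(2)}$, $\epsilon^3 u^{(3)}$ and $\epsilon^2\theta_\epsilon^{(3)}$ into an $O(\epsilon^2)$ $L^2$-remainder, leaving only $u_0 + \epsilon\theta_\epsilon^{(2)}$. The bulk terms are immediate: $u^{(2)} = \chi(x/\epsilon)(\Delta_x+\tau)u_0$ and $u^{(3)} = \vec\gamma(x/\epsilon)\cdot\nabla_x(\Delta_x+\tau)u_0$ are bounded in $L^2(D)$ by $C\|u_0\|_{H^3(D)}$ and $C\|u_0\|_{H^4(D)}$ respectively (using $L^\infty$-boundedness of $\chi,\vec\gamma$ and the Bochner-type bound $\|g(x/\epsilon)G(x)\|_{L^2(D)}\le C\|g\|_{L^\infty(Y)}\|G\|_{L^2(D)}$), so $\|\epsilon^2 u^{(2)}\|_{L^2}$ and $\|\epsilon^3 u^{(3)}\|_{L^2}$ are both $O(\epsilon^2)\|u_0\|_{H^4(D)}$. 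For the remaining interior corrector, Lemma \ref{bcbounds} with $n=3$ gives $\|\theta_\epsilon^{(3)}\|_{L^2(D)}\le C\|u_0\|_{H^5(D)}$, so $\|\epsilon^2\theta_\epsilon^{(3)}\|_{L^2(D)} = O(\epsilon^2)\|u_0\|_{H^5(D)}$.

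Collecting these bounds via the triangle inequality yields
$$ \| u_\epsilon -(u_0 + \epsilon\theta_\epsilon^{(2)}) \|_{L^2(D)} \le \| u_\epsilon -(u_0 +\epsilon^2 u^{(2)} +\epsilon\theta_\epsilon^{(2)}+ \epsilon^3 u^{(3)} +\epsilon^2\theta_\epsilon^{(3)}) \|_{L^2(D)} + \|\epsilon^2 u^{(2)}\|_{L^2} + \|\epsilon^3 u^{(3)}\|_{L^2} + \|\epsilon^2\theta_\epsilon^{(3)}\|_{L^2}, $$
and every term on the right is $O(\epsilon^2)\|u_0\|_{H^5(D)}$, giving the first claimed estimate. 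For the second estimate, I would argue similarly but starting one order earlier: from Proposition \ref{h2est},
$$ \| u_\epsilon -(u_0 +\epsilon^2 u^{(2)} +\epsilon\theta_\epsilon^{(2)}) \|_{H^2_0(D)} \le C\epsilon\|u_0\|_{H^4(D)}, $$
so in $L^2$ it remains to discard $\epsilon^2 u^{(2)}$ (which is $O(\epsilon^2)\|u_0\|_{H^3(D)}$, harmless) and $\epsilon\theta_\epsilon^{(2)}$; by Lemma \ref{bcbounds} with $n=2$, $\|\epsilon\theta_\epsilon^{(2)}\|_{L^2(D)}\le C\epsilon\|u_0\|_{H^4(D)}$, which is exactly of the allowed order. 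Hence $\|u_\epsilon-u_0\|_{L^2(D)}\le C\epsilon\|u_0\|_{H^4(D)}$.

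The only genuinely nontrivial input is Lemma \ref{bcbounds}, which is already established; everything else here is bookkeeping with the triangle inequality, the elementary oscillating-coefficient $L^2$-bound, and the Sobolev embedding $H^2_0(D)\hookrightarrow L^2(D)$. Thus I do not anticipate a real obstacle: the main point to be careful about is matching the regularity index of $u_0$ appearing on the right-hand side ($H^5$ for the sharper estimate, $H^4$ for the coarser one), dictated respectively by the $n=3$ case of Lemma \ref{bcbounds} together with Proposition \ref{h2est2}, and by the $n=2$ case together with Proposition \ref{h2est}.
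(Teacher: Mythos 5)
Your proposal is correct and follows essentially the same route as the paper: the paper's own proof is a one-line citation of Proposition \ref{h2est2} together with Lemma \ref{bcbounds} for $\theta_\epsilon^{(3)}$ (first estimate) and of Proposition \ref{h2est} together with Lemma \ref{bcbounds} for $\theta_\epsilon^{(2)}$ (second estimate), and your write-up simply supplies the triangle-inequality bookkeeping and the elementary $L^2$ bounds on the oscillating bulk terms that the paper leaves implicit.
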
 
\begin{proof} The first estimate follows from Proposition \ref{h2est2} and Lemma \ref{bcbounds} applied to $\theta_\epsilon^{(3)}$, and the second follows from Proposition \ref{h2est} and Lemma \ref{bcbounds} applied to $\theta_\epsilon^{(2)}$.
\end{proof}
\section{Transmission eigenvalue expansions} 
The following result about nonlinear eigenvalue perturbations is an extension of a special case of the results in \cite{Os}. This is a slight modification of Corollary~4.1 in \cite{Mo} for the case where we assume only that the operators themselves converge point-wise (strongly), without assuming convergence of the adjoints. The necessary modifications were shown in \cite{FuMo}.  \begin{theorem}
\label{Osbornnonlinear}
Let $X$ be a Hilbert space with sesquilinear inner product $\langle, \rangle$ and $ \{T_\epsilon(\tau):X \rightarrow X\}$ be a set of compact linear operator valued functions of $\tau$ which are analytic in a region $U$ of the complex plane, such that $T_\epsilon(\tau)\rightarrow T_0(\tau)$ pointwise as $\epsilon \rightarrow 0$ uniformly for $\tau \in U$, and that $\{ T_\epsilon(\tau)\}$ are collectively compact, uniformly in $U$.  Let $\tau_0 \neq0$, 
$\tau_0 \in U$ be a simple nonlinear eigenvalue of $T_0$, define ${\mathcal D}T_0(\tau_0)$ to be the derivative of $T_0$ with respect to $\tau$ evaluated at $\tau_0$, and let $\phi$ be a normalized eigenfunction. Then for any $\epsilon$ small enough, there exists $\tau_\epsilon$ a simple nonlinear eigenvalue of $T_\epsilon$, such that if
\begin{equation*}
1+\tau_{0}^2\langle {\mathcal D}T_0(\tau_0) \phi,\phi \rangle \neq 0
\end{equation*}
there exists a constant $C$ independent of $\epsilon$ such that   
\begin{equation}\label{eq:lmh}
\left| \tau_\epsilon - \left(\tau_0+ \dfrac{\tau_{0}^2\langle \left(T_0(\tau_0)-T_\epsilon(\tau_0)\right) \phi,\phi \rangle}{1+ \tau_0^2\langle {\mathcal D}T_0(\tau_0) \phi,\phi \rangle} \right)\right| \leq 
 C \sup_{\tau \in U}\|  (T_0(\tau)-T_\epsilon(\tau))|_{R(E)}\|^2
\end{equation}
where $R(E)$ is the one dimensional eigenspace spanned by $\phi$. 
\end{theorem}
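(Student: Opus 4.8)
\medskip
\noindent
\emph{Proof proposal.} The plan is to reduce the nonlinear problem $\tau T_\epsilon(\tau)u=u$ to a scalar equation for $\tau$ by linearizing in the spectral parameter, to apply Osborn's eigenvalue-approximation theorem for collectively compact families at each fixed $\tau$, and then to solve the scalar equation by the implicit function theorem while tracking the $\epsilon$-dependence quantitatively. The argument follows \cite{Mo} (which adapts \cite{Os}), with the modification of \cite{FuMo} that is needed because we assume only uniform strong convergence $T_\epsilon(\tau)\to T_0(\tau)$ and not a convergence rate for the adjoints. For the first step, since $\tau\mapsto T_0(\tau)$ is an analytic family of compact operators and $\mu_0:=1/\tau_0$ is a geometrically simple eigenvalue of $T_0(\tau_0)$ with eigenfunction $\phi$, analytic perturbation theory produces a neighborhood $V\subset U$ of $\tau_0$, an analytic simple eigenvalue branch $\tau\mapsto\mu_0(\tau)$ of $T_0(\tau)$ with $\mu_0(\tau_0)=1/\tau_0$, and an analytic rank-one Riesz projection $E_0(\tau)$. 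By the collective compactness and uniform pointwise convergence hypotheses, the Anselone--Osborn theory of collectively compact approximations then yields, for all $\epsilon$ small, a simple eigenvalue $\mu_\epsilon(\tau)$ of $T_\epsilon(\tau)$ that is analytic in $\tau\in V$, with rank-one projection $E_\epsilon(\tau)$, and $\mu_\epsilon(\tau)\to\mu_0(\tau)$, $E_\epsilon(\tau)\to E_0(\tau)$ uniformly on $V$.

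Next, at each fixed $\tau\in V$ one applies Osborn's estimate to the linear perturbation $T_\epsilon(\tau)\to T_0(\tau)$. Writing the eigenvalue error as the contour integral
$$\mu_0(\tau)-\mu_\epsilon(\tau)=\frac{1}{2\pi i}\,\mathrm{tr}\oint_{\Gamma} z\,\bigl[(z-T_0(\tau))^{-1}-(z-T_\epsilon(\tau))^{-1}\bigr]\,dz$$
over a small circle $\Gamma$ isolating $\mu_0(\tau)$, and expanding the resolvent difference via $(z-T_0)^{-1}-(z-T_\epsilon)^{-1}=(z-T_0)^{-1}(T_0-T_\epsilon)(z-T_\epsilon)^{-1}$, one obtains
$$\mu_\epsilon(\tau)-\mu_0(\tau)=\bigl\langle (T_\epsilon(\tau)-T_0(\tau))\phi_\tau,\phi_\tau\bigr\rangle+r_\epsilon(\tau),$$
where $\phi_\tau$ is the eigenfunction normalized as in \cite{FuMo} (so that $\phi_{\tau_0}=\phi$ also plays the role of the left eigenfunction in the pairing $\langle\,,\rangle$). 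A priori the remainder $r_\epsilon(\tau)$ is bounded by the product $\|(T_0(\tau)-T_\epsilon(\tau))|_{R(E_0(\tau))}\|\cdot\|(T_0(\tau)-T_\epsilon(\tau))^*|_{R(E_0^*(\tau))}\|$; the content of \cite{FuMo} is that, since $R(E_0(\tau))$ is one-dimensional and the adjoint factor enters the trace only multiplied against the primal restriction, this can be sharpened to $|r_\epsilon(\tau)|\le C\|(T_0(\tau)-T_\epsilon(\tau))|_{R(E_0(\tau))}\|^2$, with $C$ uniform in $\tau\in U$ by the uniformity hypotheses. Finally, differentiating the analytic branch $\mu_0(\tau)$ by first-order perturbation of a simple eigenvalue of the analytic family $T_0(\tau)$ gives $\mu_0'(\tau_0)=\langle\mathcal D T_0(\tau_0)\phi,\phi\rangle$.

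For the last step, observe that $\tau\in V$ is a nonlinear eigenvalue of $T_\epsilon$ precisely when $g_\epsilon(\tau):=\tau\mu_\epsilon(\tau)-1=0$, and $g_0(\tau_0)=0$. From the previous step,
$$g_0'(\tau_0)=\mu_0(\tau_0)+\tau_0\mu_0'(\tau_0)=\tfrac{1}{\tau_0}\bigl(1+\tau_0^2\langle\mathcal D T_0(\tau_0)\phi,\phi\rangle\bigr),$$
which is nonzero under the stated hypothesis (and $\tau_0\neq0$). Since $g_\epsilon\to g_0$ in $C^1(V)$, Rouch\'e's theorem gives for $\epsilon$ small a unique simple zero $\tau_\epsilon$ of $g_\epsilon$ near $\tau_0$ --- this is the asserted simple nonlinear eigenvalue of $T_\epsilon$ --- with $\tau_\epsilon\to\tau_0$. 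Using $g_\epsilon(\tau_0)=\tau_0(\mu_\epsilon(\tau_0)-\mu_0(\tau_0))=-\tau_0\langle (T_0(\tau_0)-T_\epsilon(\tau_0))\phi,\phi\rangle+\tau_0 r_\epsilon(\tau_0)$ together with the implicit function expansion $\tau_\epsilon=\tau_0-g_\epsilon(\tau_0)/g_0'(\tau_0)+O\bigl(|g_\epsilon(\tau_0)|^2+|\tau_\epsilon-\tau_0|^2\bigr)$ reproduces the expression in parentheses in \eqref{eq:lmh}; and since $\|(T_0-T_\epsilon)|_{R(E)}\|$ dominates $|\langle(T_0-T_\epsilon)\phi,\phi\rangle|$, all the quadratic error terms are absorbed into $C\sup_{\tau\in U}\|(T_0(\tau)-T_\epsilon(\tau))|_{R(E)}\|^2$, which is the claim.

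The main obstacle is the sharpening in the middle step: obtaining a genuine square, $|r_\epsilon(\tau)|\le C\|(T_0(\tau)-T_\epsilon(\tau))|_{R(E)}\|^2$, for the Osborn remainder without any convergence rate for the adjoint operators $T_\epsilon(\tau)^*$. This is exactly the modification carried out in \cite{FuMo}, and it rests on a careful analysis of the contour-integral representation of the eigenvalue error restricted to the one-dimensional eigenspace, where the adjoint side contributes only a uniformly bounded factor. The remaining ingredients --- the Anselone--Osborn collectively compact perturbation theory, analytic eigenvalue perturbation of $T_0(\tau)$, and the Rouch\'e/implicit function argument --- are standard, the only subtlety being that all constants must be made uniform in $\tau\in U$ so that a single supremum over $U$ appears on the right-hand side of \eqref{eq:lmh}.
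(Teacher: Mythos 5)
The paper does not actually prove this theorem: it is imported verbatim from the literature, with the statement attributed to \cite{Os} via Corollary~4.1 of \cite{Mo} and the adjoint-free modification to \cite{FuMo}. So there is no in-paper proof to compare against; what you have written is a reconstruction of the argument in those references, and as such it follows the intended route: reduce $\tau T_\epsilon(\tau)u=u$ to the scalar equation $\tau\mu_\epsilon(\tau)=1$ for the isolated linear eigenvalue branch, apply the Osborn/collectively-compact estimate at each fixed $\tau$, and close with a Rouch\'e--implicit-function step, with the supremum over $U$ entering through the Cauchy estimate that converts uniform closeness of $\mu_\epsilon$ and $\mu_0$ into closeness of their derivatives.

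Two caveats. First, the step you yourself flag as the main obstacle --- replacing Osborn's remainder bound $\|(T_0-T_\epsilon)|_{R(E)}\|\cdot\|(T_0^*-T_\epsilon^*)|_{R(E^*)}\|$ by $C\|(T_0-T_\epsilon)|_{R(E)}\|^2$ when only strong convergence of $T_\epsilon$ (not of the adjoints) is assumed --- is asserted, not proved; your one-sentence justification (``the adjoint side contributes only a uniformly bounded factor'') would give a bound that is merely first order, not the stated square, so as written this does not close the gap. Since the paper itself defers exactly this point to \cite{FuMo}, that is acceptable for a citation but it is the one place where your sketch is not self-contained. Second, in a non-self-adjoint problem the first-order Osborn term naturally pairs $(T_0-T_\epsilon)\phi$ against the \emph{adjoint} eigenfunction $\phi^*$ with $\langle\phi,\phi^*\rangle=1$; the theorem as stated (and as you reproduce it) puts $\phi$ in both slots, and your parenthetical that $\phi_{\tau_0}=\phi$ ``also plays the role of the left eigenfunction'' is a nontrivial normalization convention inherited from \cite{FuMo} that deserves to be stated as a hypothesis rather than slipped in. Neither issue is a wrong turn; both are points where the argument genuinely lives in the cited references rather than in your text.
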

Now let us consider our operators 
\begin{equation}\label{Te2}
 T_\epsilon (\tau): L^2(D) \to L^2(D),\qquad  T_\epsilon (\tau) =  \mathbb{A}_{\tau,\epsilon}^{-1} \mathbb{B}
 \end{equation}
 where  $\mathbb{A}_{\tau,\epsilon}^{-1}: H_0^2(D)\to H^2_0(D)$ is given by 
$$ \mathbb{A}_{\tau,\epsilon}^{-1} f  = \left( (\Delta +\tau ) \frac{1}{n_\epsilon-1} (\Delta +\tau ) +\tau^2 \right)^{-1}\Delta\Delta f $$
and $\mathbb{B}:L^2(D)\to H^2_0(D)$
$$ \mathbb{B}f = (\Delta\Delta)^{-1}\Delta f$$
so that 
\begin{equation}\label{Te3}
 T_\epsilon (\tau) f = \left( (\Delta +\tau ) \frac{1}{n_\epsilon-1} (\Delta +\tau ) +\tau^2 \right)^{-1}\Delta f 
 \end{equation}
 and
 \begin{equation}\label{T03}
 T_0 (\tau) f = \left( (\Delta +\tau ) \frac{1}{\overline{n}-1} (\Delta +\tau ) +\tau^2 \right)^{-1}\Delta f .
 \end{equation}
where the inverses of the fourth order operators have range in $H^2_0(D)$.  We note that $T_\epsilon(\tau)$ and $T_0(\tau)$ are well defined and compact on $L^2(D)$. This follows because $\Delta f $ makes sense in $H^{-2}(D)$, the dual of $H^2_0(D)$, so that the range of both operators is in $H^2_0(D)$, which embeds compactly in $L^2$.  We can therefore take $X=L^2(D)$, with the usual inner product, when applying the above theorem.  

\noindent
For the denominator in the correction theorem, we must compute the derivative of $T_0(\tau)$ with respect to $\tau$, ${\mathcal D}T_0(\tau)$. In fact, this derivative is computed  in  \cite{CaMoRo}, and we include the computations in Appendix \ref{derivative} for the reader's convenience. In our case, the formula (\ref{Lt}) simplifies since our $n_0 =\overline{n}$ is constant.  In particular, the derivative ${\mathcal D}T_0(\tau): L^2(D)\to L^2(D)$ is  given by $\mathcal{D}T_0(\tau)u= -v$ where $v\in H^2_0(D)$ solves 
\begin{equation}\label{dtau}
\Delta\Delta \mathbb{A}_{\tau,0} v=  \frac{2}{\bar n-1}(\Delta + \tau \bar n) \mathbb{A}^{-1}_{\tau,0}\mathbb{B} u.
\end{equation}
Note that the range of $T_0(\tau)$ and its derivative is $H^2_0(D)$. Next we compute $\left<{\mathcal D}T_0(\tau_0) \phi,\phi \right>$ where $\phi$ is an eigenvector corresponding $\tau_0$. To this end, let us denote by $L_{\tau,0}:H^2_0(D)\to H^{-2}(D)$ the mapping
\begin{equation} \label{Ltau0} L_{\tau,0} u =\left( (\Delta +\tau ) \frac{1}{\overline{n}-1} (\Delta +\tau ) +\tau^2 \right)u\end{equation}
Thus, we have $$ T_0 (\tau)=L_{\tau,0}^{-1}\Delta.$$  Note that $L_{\tau,0}$ is coercive, which means that $\left<L_{\tau,0}u,u\right>_{H^{-2}, H^2_0}\geq \alpha \|u\|_{H^2_0}$, and its inverse is well defined with range in $H^2_0(\Omega)$. Recalling also that $\mathbb{A}^{-1}_{\tau_0,0}=L_{\tau,0}^{-1}\Delta\Delta$, we have $\mathbb{A}_{\tau_0,0}=(\Delta\Delta)^{-1} L_{\tau,0}$, and equation (\ref{dtau}) becomes 
\begin{equation}\label{dtau2}
{L}_{\tau,0} v=  \frac{2}{\bar n-1}(\Delta + \tau \bar n) T_0(\tau) u 
\end{equation}
and hence 
\begin{equation}\label{dtau3}
 \mathcal{D}T_0(\tau)u =- {L}_{\tau,0}^{-1} \frac{2}{\bar n-1}(\Delta + \tau \bar n) T_0(\tau) u. 
\end{equation}
If we now take $\tau=\tau_0$ and $u=\phi$ to be the  $L^2$-normalized homogenized transmission eigenfunction corresponding to the  transmission eigenvalue $\tau_0$,  this gives  $$\mathcal{D}T_0(\tau_0)\phi =-\frac{1}{\tau_0}\frac{2}{\bar n-1}L^{-1}_{\tau_0,0}(\Delta + \tau_0 \bar n)\phi,$$
since we know that 
$$  \phi=\tau_0 T_0(\tau_0) \phi. $$ The above calculations yield 
\begin{eqnarray}
1+ \tau_0^2\langle {\mathcal D}T_0(\tau_0) \phi,\phi \rangle &=&1-\frac{2\tau_0}{\bar n-1}\langle L^{-1}_{\tau_0,0}(\Delta + \tau_0 \bar n)\phi,\phi\rangle \label{denomform}\\
&=&1-\frac{2\tau_0}{\bar n-1}\langle T_0(\tau_0) \phi,\phi\rangle -\frac{2\tau^2_0 \bar n}{\bar n-1}\langle L^{-1}_{\tau_0,0}\phi,\phi\rangle\nonumber \\
&=&1-\frac{2}{\bar n-1}-\frac{2\tau^2_0 \bar n}{\bar n-1}\langle L^{-1}_{\tau_0,0}\phi,\phi\rangle\nonumber\\ &=& \frac{1}{\bar n-1}\left(\bar n-3  -2\tau_0\bar n \langle L^{-1}_{\tau_0,0}\phi,\phi\rangle\right).
\nonumber\end{eqnarray}
This expression is obviously non-zero if $1<\bar n\leq 3$, since $L^{-1}_{\tau_0,0}$ is nonnegative and $\tau_0>0$.  To compute  $L^{-1}_{\tau_0,0}\phi$ for the given transmission eigen-pair $(\tau_0,\phi)$ of the homogenized problem, one must solve 
$$(\Delta +\tau_0 ) \frac{1}{\overline{n}-1} (\Delta +\tau_0 )w +\tau_0^2w=\phi \qquad \mbox{for}\; w\in H^2_0(D).$$
Thus it is easy to numerically check if $\left(\bar n-3  -2\tau_0\bar n \langle w,\phi\rangle\right)\neq 0$. In order to  evaluate the numerator we use the asymptotic estimate developed above.  To this end we have 
$$T_0(\tau_0)\phi-T_\epsilon(\tau_0)\phi=L_{\tau_0,0}^{-1}\Delta \phi-L_{\tau_0,\epsilon}^{-1}\Delta \phi$$
where $u_\epsilon:=L_{\tau_0,\epsilon}^{-1}\Delta \phi$  and  $u_0:=L_{\tau_0,0}^{-1}\Delta \phi$ and $u_\epsilon:=L_{\tau_0,\epsilon}^{-1}\Delta \phi$ are the solutions of 
\begin{equation*}  (\Delta +\tau) \frac{1}{n(x/\epsilon)-1}(\Delta +\tau_0) u_\epsilon +\tau_0^2 u_\epsilon= \Delta \phi \end{equation*}
and
\begin{equation*}  (\Delta +\tau) \frac{1}{\bar n-1} (\Delta +\tau_0) u_0 +\tau_0^2 u_0= \Delta \phi,
\end{equation*}
respectively.  From Corollary \ref{l2estexpansion} we have that 
\begin{equation}\label{numform}\left<T_0(\tau_0)\phi-T_\epsilon(\tau_0)\phi, \phi\right>=-\left<\epsilon\theta_\epsilon,\phi\right>+O(\epsilon^2)\end{equation}
where $\theta_\epsilon$ is the solution of 
\begin{eqnarray}   (\Delta +\tau_0) \frac{1}{n(x/\epsilon)-1} (\Delta +\tau_0) \theta_\epsilon^{(2)} +\tau_0^2\theta_\epsilon^{(2)} =  0 && \ \ \ \mbox{in}\ \ D \label{finaltheta} \\ 
\theta_\epsilon^{(2)} = -\epsilon u^{(2)} && \ \ \mbox{on}\ \ \partial D \nonumber \\ {\partial  \theta_\epsilon^{(2)} \over{\partial\nu}}= -\epsilon {\partial u^{(2)}\over{\partial\nu}} &&\ \ \ \mbox{on} \ \ \ \partial D,\nonumber\end{eqnarray}
$$ \mbox{with} \qquad u^{(2)} = \frac{1}{\tau_0}\chi(y) (\Delta_x +\tau_0) \phi   \qquad  \chi ={\beta \over{\overline{n}-1}}$$
where $\beta$ is $Y$-periodic, has cell average zero, and solves  $$\Delta_y \beta(y) = {n(y) -\overline{n}},$$ where we used that  $u_0=\phi/\tau_0$. 
\begin{theorem}
 Assume $n_\epsilon:=n(x/\epsilon)\in L^\infty(D)$ is periodic in $y:=x/\epsilon$ for $y\in Y= [0,1]^d$, and  $n(y)-1$ is positive uniformly in $Y$. Let $\tau_0$ be a simple transmission eigenvalue of the homogenized problem with constant refractive index $\bar n$, and $\phi$ the corresponding eigenfunction normalized such that $\|\phi\|_{L^2(D)}=1$. We assume that $\partial D$ is smooth enough so that $\phi\in H^5(D)$. Then for any $\epsilon>0$ sufficiently small, there exists a simple transmission eigenvalue $\tau_\epsilon>0$ of the periodic media with refractive index $n(x/\epsilon)$, which satisfies the following asymptotic expansion
 \begin{equation}\label{correctedeig}
 \tau_\epsilon=\tau_0+\epsilon\frac{\tau_0^2(1-\bar n)\left<\theta_\epsilon^{(2)}, \phi\right>_{L^2(D)}}{\bar n-3-2\tau_0\bar n \langle L^{-1}_{\tau_0,0}\phi,\phi\rangle_{L^2(D)}}+O(\epsilon^2)
 \end{equation} provided that $\langle L^{-1}_{\tau_0,0}\phi,\phi\rangle_{L^2(D)}\neq \frac{\bar n-3}{2\tau_0\bar n}$,  where $\theta_\epsilon^{(2)}$ is given by (\ref{finaltheta}) and $L_{\tau_0,0}$ is given by (\ref{Ltau0}).
\end{theorem}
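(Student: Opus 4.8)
The plan is to apply the abstract nonlinear eigenvalue perturbation result, Theorem~\ref{Osbornnonlinear}, with $X=L^2(D)$ and the operator-valued functions $T_\epsilon(\tau)$ of (\ref{Te3})--(\ref{T03}), and then to evaluate the numerator and denominator in (\ref{eq:lmh}) using the homogenization estimates of Section~\ref{sec3}. First I would fix a small complex disk $U$ centered at $\tau_0$ on which all the $T_\epsilon(\tau)$ are defined and analytic: since $\mathbb{A}_{\tau,\epsilon}$ depends polynomially on $\tau$ and is coercive with a constant independent of $\tau$ for $\tau>0$ real (by \cite{CaHa}), a Neumann-series perturbation about the real point $\tau_0$ shows that $\mathbb{A}_{\tau,\epsilon}^{-1}$ exists and is bounded uniformly in $\epsilon$ for $\tau$ in a disk of $\epsilon$-independent radius, so $\tau\mapsto T_\epsilon(\tau)=\mathbb{A}_{\tau,\epsilon}^{-1}\mathbb{B}$ is analytic there. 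Collective compactness of $\{T_\epsilon(\tau)\}$, uniform on $U$, is immediate because the range of $T_\epsilon(\tau)$ lies in $H^2_0(D)$ with norm bounded uniformly in $\epsilon$ and $\tau\in U$, and $H^2_0(D)\hookrightarrow L^2(D)$ compactly; pointwise convergence $T_\epsilon(\tau)f\to T_0(\tau)f$ in $L^2(D)$, uniform in $\tau\in U$, follows from the second estimate of Corollary~\ref{l2estexpansion} on smooth $f$ together with a density argument using the uniform bound. The remaining hypothesis, that $\tau_0$ be a simple nonlinear eigenvalue of $T_0$, is exactly the assumption that $\tau_0$ is a simple transmission eigenvalue of the homogenized problem, via the equivalence (\ref{TE0}).

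Next I would verify the non-degeneracy condition $1+\tau_0^2\langle{\mathcal D}T_0(\tau_0)\phi,\phi\rangle\neq 0$ and record the denominator. Using the formula (\ref{dtau3}) for ${\mathcal D}T_0(\tau)$ together with $\phi=\tau_0 T_0(\tau_0)\phi$ and $\|\phi\|_{L^2(D)}=1$, the computation already carried out in (\ref{denomform}) gives
$$1+\tau_0^2\langle{\mathcal D}T_0(\tau_0)\phi,\phi\rangle=\frac{1}{\bar n-1}\bigl(\bar n-3-2\tau_0\bar n\,\langle L^{-1}_{\tau_0,0}\phi,\phi\rangle\bigr),$$
which is nonzero precisely under the hypothesis $\langle L^{-1}_{\tau_0,0}\phi,\phi\rangle\neq\tfrac{\bar n-3}{2\tau_0\bar n}$; this supplies the non-degeneracy condition needed to invoke Theorem~\ref{Osbornnonlinear}.

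Then I would evaluate the numerator. Setting $u_\epsilon=T_\epsilon(\tau_0)\phi$ and $u_0=T_0(\tau_0)\phi$, these are the $H^2_0(D)$ solutions of the fourth-order problems with right-hand side $\Delta\phi$, and since $\phi=\tau_0 T_0(\tau_0)\phi$ we have $u_0=\phi/\tau_0\in H^5(D)$ by the regularity assumption on $\partial D$. The first estimate of Corollary~\ref{l2estexpansion} then yields $u_\epsilon-u_0=\epsilon\theta_\epsilon^{(2)}+O(\epsilon^2)$ in $L^2(D)$, where $\theta_\epsilon^{(2)}$ is the boundary corrector (\ref{thetaeps2}) with $n=2$; using Remark~\ref{rem} to write $\chi=\beta/(\bar n-1)$ and $u^{(2)}=\tfrac{1}{\tau_0}\chi(x/\epsilon)(\Delta_x+\tau_0)\phi$, this is exactly problem (\ref{finaltheta}), so $\langle(T_0(\tau_0)-T_\epsilon(\tau_0))\phi,\phi\rangle=-\epsilon\langle\theta_\epsilon^{(2)},\phi\rangle+O(\epsilon^2)$ as in (\ref{numform}). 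The second estimate of Corollary~\ref{l2estexpansion} bounds $\|(T_0(\tau)-T_\epsilon(\tau))\phi\|_{L^2(D)}\le C\epsilon$ uniformly in $\tau\in U$, so the right-hand side of (\ref{eq:lmh}) is $O(\epsilon^2)$. Substituting the numerator and denominator into (\ref{eq:lmh}) and clearing the factor $1/(\bar n-1)$ gives
$$\tau_\epsilon=\tau_0+\epsilon\,\frac{\tau_0^2(1-\bar n)\langle\theta_\epsilon^{(2)},\phi\rangle_{L^2(D)}}{\bar n-3-2\tau_0\bar n\langle L^{-1}_{\tau_0,0}\phi,\phi\rangle_{L^2(D)}}+O(\epsilon^2),$$
which is (\ref{correctedeig}); simplicity of $\tau_\epsilon$ for small $\epsilon$ is part of the conclusion of Theorem~\ref{Osbornnonlinear}.

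Since the bulk of the analytic work has been front-loaded into Section~\ref{sec3}, the main obstacle within this proof is ensuring the hypotheses of the abstract theorem hold with the required uniformity over the complex neighborhood $U$: one must check that the coercivity of $\mathbb{A}_{\tau,\epsilon}$ and the $O(\epsilon)$ and $O(\epsilon^2)$ homogenization estimates of Section~\ref{sec3}, all derived for real $\tau$, survive a perturbation to complex $\tau$ near $\tau_0$ with $\epsilon$-independent constants (a Gårding-type argument suffices, but the bookkeeping is delicate). A secondary point deserving care is that the correction term $\epsilon\langle\theta_\epsilon^{(2)},\phi\rangle$ itself still depends on $\epsilon$ through the boundary corrector; Lemma~\ref{bcbounds} guarantees $\langle\theta_\epsilon^{(2)},\phi\rangle$ is $O(1)$, so the term is genuinely first order, but—as discussed in the introduction—it does not in general reduce to an $\epsilon$-independent constant, which is why the expansion is stated with $\theta_\epsilon^{(2)}$ left in place.
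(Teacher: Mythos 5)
Your proposal is correct and follows essentially the same route as the paper: apply Theorem~\ref{Osbornnonlinear} with $X=L^2(D)$, verify collective compactness and uniform pointwise convergence via Corollary~\ref{l2estexpansion}, and substitute the already-computed denominator (\ref{denomform}) and numerator (\ref{numform}) into (\ref{eq:lmh}). Your treatment is in fact slightly more explicit than the paper's on the analyticity and uniform coercivity of $\mathbb{A}_{\tau,\epsilon}^{-1}$ for complex $\tau$ near $\tau_0$, a point the paper dispatches with a citation to \cite{CaHa}.
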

\begin{proof}
First we note that Corollary  \ref{l2estexpansion} gives us that for any given $f\in L^2(\Omega)$, \begin{equation} \label{estnow} \| (T_\epsilon(\tau) - T_0(\tau) ) f \|_{L^2(\Omega)} \leq C_\tau \epsilon.\end{equation}  For $\tau$ given in a bounded region of the complex plane, $C_\tau$ can be bounded inependently of $\tau$  from the explicit coercivity of of the fourth order operator \cite{CaHa}. Hence we have strong pointwise convergence of the operators in $L^2(\Omega)$. Furthermore, the operators $\{ T_\epsilon (\tau) \} $ are collectively compact, since $\{ u_\epsilon= T_\epsilon (\tau) f \} $ satisfy $$ \| u_\epsilon \|_{H^2(\Omega)} \leq C \| f\|_{L^2(\Omega)} $$ where $C$ is independent of $\epsilon$ and $\tau$. Hence we can apply Theorem \ref{Osbornnonlinear}. Furthermore, (\ref{estnow}) says that the right hand side of (\ref{eq:lmh}) is $O(\epsilon^2)$, since the eigenspace $R(E)$ is finite dimensional (in fact one dimensional in our case).  We have already calculated the expressions on the left hand side of (\ref{eq:lmh}); the denominator is given by (\ref{denomform}) and the numerator is given by (\ref{numform}).  The result follows from inserting these formulas into (\ref{eq:lmh}). 
\end{proof}

\begin{remark}
{\em It would be desirable to have the correction value in formula (\ref{correctedeig}) be independent of $\epsilon$. 
Note that from Lemma \ref{bcbounds} we have that the term $\left<\theta_\epsilon^{(2)}, \phi\right>$ is bounded with respect to $\epsilon$, so any sequence of $\epsilon_k\rightarrow 0$ will have subsequential limits. We would like to characterize its limit points, or at least rewrite it in a more explicit form.  For similar eigenvalue problems in second order homogenization in bounded domains \cite{GeMa1, GeMa2, MoVo1, MoVo2, SaVo}, the precise value of the boundary corrector limit is complicated by two factors 
(i) The limit may not be unique if the domain has boundary with flat parts of rational or infinite slope. We expect that the limit will be unique for smooth $D$ in which the boundary has no flat parts. (ii)  Even when the limit is unique, there is no known explicit characterization of the limit.  It may very well be the case that the first order transmission eigenvalue corrections exhibit both of these complications.  One needs to study the behavior of the boundary correctors for fourth order homogenization problems, and this is the subject of future work.  In the next section we consider the one-dimensional case, which is easier to analyze, and demonstrates that the corrector is not generically zero. Furthermore, if the scatterer has flat parts with rational or infinite slope, the one dimensional study suggests that the corrector will depend on how the boundary cuts the microstructure.}
\end{remark}
\section{The one dimensional case.}
Although we explicitly took the dimension $d=2$ or $d=3$, the same results clearly hold for $d=1$. Let us take $D=(0,1)$ for simplicity, while noting that the following can easily be extended with small modifications general intervals $(a,b)$. 
Recalling that the eigenfunction has zero Cauchy data at the boundary, the one dimensional boundary corrector function $\theta_\epsilon^{(2)}$ here satisfies 
\begin{eqnarray}   \left({d^2\over{dx^2}}+\tau_0\right) \frac{1}{n(x/\epsilon)-1} \left( {d^2\over{dx^2}}+\tau_0\right) \theta_\epsilon^{(2)} +\tau_0^2\theta_\epsilon^{(2)} =  0 \ \ \ \mbox{on}\ \ (0,1) & &\label{theta1d} \\ 
\theta_\epsilon^{(2)}(0) = -\epsilon \frac{\beta(0)}{\tau_0(\overline{n}-1)} {d^2\phi\over{dx^2}}(0)  & & \nonumber \\ \theta_\epsilon^{(2)}(1) = -\epsilon \frac{\beta(1/\epsilon) }{\tau_0(\overline{n}-1)}{d^2\phi\over{dx^2}}(1)& & \nonumber \\ (\theta_\epsilon^{(2)})'(0) = -  \frac{1}{\tau_0(\overline{n}-1)}{d\beta\over{d y}}(0) {d^2\phi\over{dx^2}}(0)  -\epsilon   \frac{\beta(0)}{\tau_0(\overline{n}-1)} {d^3\phi\over{dx^3}} (0)  && \nonumber\\  (\theta_\epsilon^{(2)})'(1) = -  \frac{1}{\tau_0(\overline{n}-1)}{d\beta\over{d y}}(1/\epsilon) {d^2\phi\over{dx^2}}(1)  -\epsilon  \frac{\beta(1/\epsilon)}{\tau_0(\overline{n}-1)}  {d^3\phi\over{dx^3}}(1).\nonumber\end{eqnarray}
In the limit, the boundary terms with $\epsilon$ will disappear, and so the limit will be dominated by the first terms of the Neumann data. Notice that as $\epsilon\rightarrow 0$, this first term is fixed on the left but changing with $\epsilon$ on the right. We see here that the limit of this boundary data is not unique, and depends on the sequence $\epsilon_k \rightarrow 0$. Assume that $$\epsilon_k = {1\over{N_k+\delta}}$$
where $N_k\rightarrow\infty$ are integers, so that $1/\epsilon_k -\lfloor{1/\epsilon_k}\rfloor = \delta$, and 
$$ {d\beta\over{d y}}(1/\epsilon_k) = {d\beta\over{d y}}(\delta) $$
due to periodicity. The sequences $\epsilon_k$ for which the boundary corrector has a limit are those for which this cutoff $\delta_k$ has a limit $\delta$. We see that for fixed cutoff  the equation for the corrector becomes a standard fourth order homogenization problem. Hence the corrector converges in $L^2$ at order $\epsilon$ to $\theta^*$, the solution to 
\begin{eqnarray}   \left({d^2\over{dx^2}}+\tau_0\right) \frac{1}{\overline{n}-1} \left( {d^2\over{dx^2}}+\tau_0\right) \theta^{*} +\tau_0^2\theta^{*} &=&  0 \ \ \ \mbox{on}\ \ (0,1)\label{thetastar} \\ 
\theta^{*}(0) &=& 0  \nonumber \\ \theta^{*}(1) &=& 0  \nonumber \\ (\theta^{*})'(0) &=& -  \frac{1}{\tau_0(\overline{n}-1)}{d\beta\over{d y}}(0) {d^2\phi\over{dx^2}}(0) \nonumber \\  (\theta^{*})'(1) & = &-  \frac{1}{\tau_0(\overline{n}-1)}{d\beta\over{d y}}(\delta) {d^2\phi\over{dx^2}}(1) .\nonumber\end{eqnarray}
We therefore have an explicit formula for the transmission eigenvalue corrector in one dimension, summarized in the following Theorem. 
\begin{theorem}
 Assume the dimension $d=1$ with period cell $Y=[0,1]$, and $n_\epsilon:=n(x/\epsilon)\in L^\infty(D)$ is periodic in $y:=x/\epsilon$ such that $n(y)-1$ is positive uniformly in $Y$. Let $\tau_0$ be a simple transmission eigenvalue of the homogenized problem with constant refractive index $\bar n$, and $\phi$ the corresponding eigenfunction normalized such that $\|\phi\|_{L^2(D)}=1$.  Assume $$\epsilon_k = {1\over{N_k+\delta}}$$  where $N_k\rightarrow\infty$ are integers. Then for any $k$ sufficiently large, there exists a simple transmission eigenvalue $\tau_\epsilon>0$ of the periodic media with refractive index $n(x/\epsilon)$, which satisfies the following asymptotic expansion
 \begin{equation}\label{correctedeig}
 \tau_\epsilon=\tau_0+\epsilon\frac{\tau_0^2(1-\bar n)\left<\theta^{*}, \phi\right>_{L^2(D)}}{\bar n-3-2\tau_0\bar n \langle L^{-1}_{\tau_0,0}\phi,\phi\rangle_{L^2(D)}}+O(\epsilon^2)
 \end{equation} provided that $\langle L^{-1}_{\tau_0,0}\phi,\phi\rangle_{L^2(D)}\neq \frac{\bar n-3}{2\tau_0\bar n}$,  where $\theta^{*}$ is given by (\ref{thetastar}) and $L_{\tau_0,0}$ is given by (\ref{Ltau0}).
\end{theorem}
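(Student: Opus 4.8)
The plan is to derive the one‑dimensional statement from the $d=2,3$ theorem already established above, which holds verbatim for $d=1$ with $D=(0,1)$ (whose boundary, two points, is trivially smooth) and $\phi\in H^5(0,1)$. First I would apply that theorem: since $N_k\to\infty$ forces $\epsilon_k\to 0$, for all large $k$ it produces a simple transmission eigenvalue $\tau_{\epsilon_k}>0$ of the periodic medium satisfying
\[
\tau_{\epsilon_k}=\tau_0+\epsilon_k\,\frac{\tau_0^2(1-\bar n)\,\langle\theta_{\epsilon_k}^{(2)},\phi\rangle_{L^2(D)}}{\bar n-3-2\tau_0\bar n\,\langle L^{-1}_{\tau_0,0}\phi,\phi\rangle_{L^2(D)}}+O(\epsilon_k^2)
\]
under the same non‑vanishing hypothesis on $\langle L^{-1}_{\tau_0,0}\phi,\phi\rangle$, where $\theta_{\epsilon_k}^{(2)}$ solves (\ref{theta1d}). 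Comparing this with the claimed formula, it suffices to prove $\langle\theta_{\epsilon_k}^{(2)}-\theta^*,\phi\rangle_{L^2(D)}=O(\epsilon_k)$; because this difference is multiplied by $\epsilon_k$, the resulting error merges with the $O(\epsilon_k^2)$ term. I would obtain it from the stronger bound $\|\theta_{\epsilon_k}^{(2)}-\theta^*\|_{L^2(D)}=O(\epsilon_k)$.

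The second step is the comparison of the boundary data of $\theta_{\epsilon_k}^{(2)}$ in (\ref{theta1d}) with that of $\theta^*$ in (\ref{thetastar}). Since $\beta$ is $1$‑periodic and $1/\epsilon_k=N_k+\delta$, one has $\beta(1/\epsilon_k)=\beta(\delta)$ and $\beta'(1/\epsilon_k)=\beta'(\delta)$; and since $\phi\in H^5(0,1)\hookrightarrow C^4([0,1])$, the endpoint values $\phi''(0),\phi'''(0),\phi''(1),\phi'''(1)$ are finite and bounded by $C\|\phi\|_{H^5(D)}$. Reading off the four Dirichlet/Neumann values at the two endpoints then shows that the only non‑decaying parts of the data of $\theta_{\epsilon_k}^{(2)}$ — the leading Neumann terms $-\tfrac{\beta'(0)}{\tau_0(\bar n-1)}\phi''(0)$ at $x=0$ and $-\tfrac{\beta'(\delta)}{\tau_0(\bar n-1)}\phi''(1)$ at $x=1$ — coincide \emph{exactly} with those of $\theta^*$ thanks to the choice of the sequence $\epsilon_k$, while every remaining discrepancy is $\epsilon_k$ times a fixed quantity, hence $O(\epsilon_k\|\phi\|_{H^5(D)})$ in any trace norm.

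For the third step I would split $\theta_{\epsilon_k}^{(2)}-\theta^*=(\theta_{\epsilon_k}^{(2)}-\tilde\theta_{\epsilon_k})+(\tilde\theta_{\epsilon_k}-\theta^*)$, where $\tilde\theta_{\epsilon_k}$ solves the \emph{oscillatory} fourth‑order problem (\ref{theta1d}) but carrying instead the boundary data of $\theta^*$. The first difference solves the oscillatory equation with boundary data equal to the $O(\epsilon_k)$ mismatch from Step 2; subtracting $\epsilon_k$ times a fixed smooth lifting of that mismatch, writing the residual as a second‑order system and invoking Lemma \ref{systemlem} together with the $\epsilon$‑uniform coercivity of $\mathbb{A}_{\tau_0,\epsilon}$ gives $\|\theta_{\epsilon_k}^{(2)}-\tilde\theta_{\epsilon_k}\|_{L^2(D)}\le C\epsilon_k\|\phi\|_{H^5(D)}$. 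The second difference is a standard one‑dimensional fourth‑order homogenization problem with fixed data: $\tilde\theta_{\epsilon_k}$ and $\theta^*$ solve the same boundary value problem with identical smooth data, differing only in that $1/(n(x/\epsilon_k)-1)$ is replaced by its homogenized constant $1/(\bar n-1)$. Repeating the two‑scale argument behind Proposition \ref{h2est} and Corollary \ref{l2estexpansion} — now, since the boundary data already agree, only one boundary corrector $\vartheta_{\epsilon_k}$ is needed, to absorb the $O(\epsilon_k)$ Neumann mismatch created at the two endpoints by the bulk term $\epsilon_k^2\chi(x/\epsilon_k)(\partial_x^2+\tau_0)\theta^*$ — yields $\|\tilde\theta_{\epsilon_k}-\theta^*-\epsilon_k\vartheta_{\epsilon_k}\|_{L^2(D)}\le C\epsilon_k\|\theta^*\|_{H^4(D)}$, while the argument of Lemma \ref{bcbounds} gives $\|\vartheta_{\epsilon_k}\|_{L^2(D)}\le C\|\theta^*\|_{H^4(D)}$; since $\theta^*$ solves a constant‑coefficient fourth‑order ODE with data controlled by $\|\phi\|_{H^5(D)}$, elliptic regularity gives $\|\theta^*\|_{H^4(D)}\le C\|\phi\|_{H^5(D)}$. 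Adding the two bounds proves $\|\theta_{\epsilon_k}^{(2)}-\theta^*\|_{L^2(D)}=O(\epsilon_k)$, and substituting into the displayed expansion finishes the proof.

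I expect the main obstacle to be the second difference $\|\tilde\theta_{\epsilon_k}-\theta^*\|_{L^2(D)}=O(\epsilon_k)$: although in one dimension the boundary reduces to two points, one must still verify carefully that the required boundary corrector enters only at order $\epsilon_k$ (it is driven by the $O(\epsilon_k)$ Neumann trace of the $\epsilon_k^2$ bulk corrector) and is $L^2$‑bounded uniformly in $k$, and one must keep the comparison relative to $\theta^*$ rather than trying to estimate $\theta_{\epsilon_k}^{(2)}$ directly, since its Neumann data is $O(1)$ and does not decay.
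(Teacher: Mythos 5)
Your proposal is correct and follows essentially the same route as the paper: the paper likewise observes that for $\epsilon_k=1/(N_k+\delta)$ the leading Neumann data stabilizes by periodicity of $\beta$, the remaining boundary discrepancies are $O(\epsilon_k)$, and the corrector problem then reduces to a standard fourth-order homogenization problem whose solution converges to $\theta^*$ in $L^2$ at rate $\epsilon_k$, which is then inserted into the general correction formula. Your write-up merely makes explicit (via the splitting through $\tilde\theta_{\epsilon_k}$ and the reuse of Lemma \ref{systemlem}, Proposition \ref{h2est} and Lemma \ref{bcbounds}) the steps the paper leaves as a brief discussion preceding the theorem.
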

\section{Conclusions}
In this manuscript we derived an asymptotic expansion for the transmission eigenvalues of a scatterer with periodically varying index of refraction in the case when the contrast does not change sign. In this situation, we were able to use the fourth order formulation for the transmission eigenvalue problem, and its analysis required us to study a fourth order homogenization problem. The two scale asymptotics reveal a boundary corrector as the largest microstructure effect, and this boundary corrector appears in the correction formula for the transmission eigenvalues. It appears that this boundary corrector function may exhibit all of the difficulties of the boundary correctors in many second order homogenization problems; in particular the lack of an explicit formula for its limit for dimension $d\geq 2$, even in the case of smooth domains. 
An explicit formula would allow us to determine what information about the microstructure of the medium can be extracted from the transmission eigenvalues. 
The analysis of these fourth order boundary correctors is therefore of great interest, and is the subject of our future work. 

\section{Appendix: Technical Lemmas}
In this section, we will collect the technical lemmas that are necessary for  the results in this paper. 
\subsection{Derivative of $T_0(\tau)$ with respect to $\tau$}\label{derivative}
This Section is taken from \cite{CaMoRo} for the reader's convenience. To apply the theorem, we need to compute the derivative of $T_0(\tau)=\mathbb{A}^{-1}_{\tau,0}\mathbb{B}$ with respect to $\tau$, evaluated at a function $u$. However, since $\mathbb{B}$ does not depend on $\tau$, this problem is equivalent to the derivative of $\mathbb{A}^{-1}_{\tau,0}$ evaluated at $\mathbb{B}u$. Thus it is only necessary to compute ${\mathcal D}\mathbb{A}^{-1}_{\tau,0}$. With that in mind, for $u\in H^2_0(D)$ we define the solution map ${\mathcal L}_\tau$ to variational problem:
\begin{equation}\label{Lt}
\Delta\Delta \mathbb{A}_{\tau,0} {\mathcal L}_\tau u = \Delta \left ( \frac{1}{n_0-1} \mathbb{A}^{-1}_{\tau,0}u\right ) + \frac{1}{n_0-1} \Delta \mathbb{A}^{-1}_{\tau,0} u + 2\tau\left( \frac{1}{n_0-1}+1 \right )\mathbb{A}^{-1}_{\tau,0}u
\end{equation}
which exists and is bounded due to Riesz Representation. Further, define  for $u\in H^2_0(D)$,
\begin{equation}
u_{\tau}=\mathbb{A}^{-1}_{\tau,0}u.
\end{equation} 
Notice that by construction,
\begin{align}\label{derividen}
(\mathbb{A}_{\tau,0} {\mathcal L}_{\tau} u, \phi)_{H^2_0(D)} &=\left ( \frac{1}{n_0-1} u_\tau, \Delta \phi\right)_{L^2(D)} +\left ( \frac{1}{n_0-1} \Delta u_\tau, \phi\right)_{L^2(D)} \notag \\
&+ 2\tau\left ( \left( \frac{1}{n_0-1}+1 \right ) u_\tau, \phi\right )_{L^2(D)}
\end{align}
\begin{proposition} Let ${\mathcal L}$ be defined by  \label{LtP} and  $\tau>0$. Then the  derivative of $\mathbb{A}^{-1}_{\tau,0}$ with respect to $\tau$ is  $-{\mathcal L}_{\tau}$, that is, ${\mathcal D}\mathbb{A}^{-1}_{\tau,0}=-{\mathcal L}_\tau$.
\end{proposition}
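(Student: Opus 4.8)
The plan is to differentiate, in $\tau$, the variational identity that characterizes $\mathbb{A}^{-1}_{\tau,0}$, and then to read off the answer from the identity \eqref{derividen}. Fix $u\in H^2_0(D)$ and set $u_\tau:=\mathbb{A}^{-1}_{\tau,0}u$, which is the unique element of $H^2_0(D)$ with
\begin{equation*}
\mathcal{A}_{\tau,0}(u_\tau,\phi)=(u,\phi)_{H^2_0(D)}\qquad\text{for all }\phi\in H^2_0(D),
\end{equation*}
where $\mathcal{A}_{\tau,0}$ is \eqref{AA} with $n_\epsilon$ replaced by $n_0$. The right-hand side does not depend on $\tau$; so, granting that $\tau\mapsto u_\tau$ is differentiable as an $H^2_0(D)$-valued map (discussed below), differentiation and bilinearity give, for each fixed $\phi$,
\begin{equation*}
\mathcal{A}_{\tau,0}\!\left(\tfrac{d}{d\tau}u_\tau,\phi\right)=-\big[\partial_\tau\mathcal{A}_{\tau,0}\big](u_\tau,\phi),
\end{equation*}
in which $\partial_\tau\mathcal{A}_{\tau,0}$ is the derivative of the bilinear form with respect to its explicit $\tau$-dependence, the first slot held fixed.

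Next I would compute $\partial_\tau\mathcal{A}_{\tau,0}$ directly. Writing $g:=1/(n_0-1)$ and differentiating $\mathcal{A}_{\tau,0}(v,\phi)=(g(\Delta v+\tau v),\Delta\phi+\tau\phi)_{L^2(D)}+\tau^2(v,\phi)_{L^2(D)}$ yields
\begin{equation*}
\big[\partial_\tau\mathcal{A}_{\tau,0}\big](v,\phi)=(g\,v,\Delta\phi)_{L^2(D)}+(g\,\Delta v,\phi)_{L^2(D)}+2\tau\big((g+1)v,\phi\big)_{L^2(D)}.
\end{equation*}
Evaluating at $v=u_\tau$, the right-hand side is precisely the right-hand side of \eqref{derividen}; hence $\big[\partial_\tau\mathcal{A}_{\tau,0}\big](u_\tau,\phi)=(\mathbb{A}_{\tau,0}\mathcal{L}_\tau u,\phi)_{H^2_0(D)}=\mathcal{A}_{\tau,0}(\mathcal{L}_\tau u,\phi)$. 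Substituting back, $\mathcal{A}_{\tau,0}\!\left(\tfrac{d}{d\tau}u_\tau+\mathcal{L}_\tau u,\phi\right)=0$ for all $\phi\in H^2_0(D)$. Since $\mathcal{A}_{\tau,0}$ is coercive on $H^2_0(D)$ for $\tau>0$, with coercivity constant independent of $\tau$ \cite{CaHa}, choosing $\phi=\tfrac{d}{d\tau}u_\tau+\mathcal{L}_\tau u$ forces this element to vanish. Therefore $\tfrac{d}{d\tau}\big(\mathbb{A}^{-1}_{\tau,0}u\big)=-\mathcal{L}_\tau u$, and as $u\in H^2_0(D)$ was arbitrary, $\mathcal{D}\mathbb{A}^{-1}_{\tau,0}=-\mathcal{L}_\tau$.

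The only point needing care — and thus the main, rather mild, obstacle — is the legitimacy of differentiating $\tau\mapsto u_\tau$ used above. This is standard: $\tau\mapsto\mathbb{A}_{\tau,0}$ is a degree-two polynomial in $\tau$ with bounded-operator coefficients on $H^2_0(D)$, hence entire in the operator norm, while the $\tau$-uniform coercivity of \cite{CaHa} makes it boundedly invertible on compact subsets of $(0,\infty)$; a Neumann-series argument then shows $\tau\mapsto\mathbb{A}^{-1}_{\tau,0}$ is real-analytic there. One could instead bypass the analyticity statement entirely and estimate the difference quotient $u_{\tau+h}-u_\tau$ directly from the variational problem, absorbing its $H^2_0(D)$-norm via coercivity to get Lipschitz dependence, then repeating the argument on the increment to obtain the derivative. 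Either way the substance of the proof is the algebraic matching with \eqref{derividen}, which is already in place, so what remains is routine; the same argument, \emph{mutatis mutandis}, covers non-constant $n_0$, since nothing above used that $g$ is constant.
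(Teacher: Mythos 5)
Your proposal is correct, and its algebraic core is exactly the paper's: everything hinges on recognizing that the right-hand side of \eqref{derividen} is $\partial_\tau\mathcal{A}_{\tau,0}$ evaluated at $(u_\tau,\phi)$, and then invoking coercivity to peel off the test function. Where you differ is in how the differentiation is made rigorous. The paper never differentiates anything formally: it works entirely with the finite increment, expanding $\mathbb{A}_{\tau,0}u_\tau-\mathbb{A}_{\tau+h,0}u_\tau$ explicitly in powers of $h$, showing that the $O(h)$ terms are cancelled by $h\,\mathbb{A}_{\tau+h,0}\mathcal{L}_\tau u$ via \eqref{derividen}, and then testing against $\phi=u_{\tau+h}-u_\tau+h\mathcal{L}_\tau u$ to get the quantitative bound $\|\mathbb{A}^{-1}_{\tau+h,0}-\mathbb{A}^{-1}_{\tau,0}+h\mathcal{L}_\tau\|_{\mathcal{L}(H^2_0(D))}=O(h^2)$, which delivers the Fr\'echet (operator-norm) derivative together with an explicit second-order remainder. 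You instead establish differentiability of $\tau\mapsto\mathbb{A}^{-1}_{\tau,0}$ abstractly (polynomial dependence plus $\tau$-uniform coercivity and a Neumann series) and then identify the derivative by differentiating the variational identity; this is shorter and arguably cleaner, at the price of deferring the quantitative content to the analyticity claim. One small point worth making explicit in your write-up: differentiating $\mathcal{A}_{\tau,0}(u_\tau,\phi)=(u,\phi)_{H^2_0(D)}$ for each fixed $u$ a priori identifies only the strong (pointwise) derivative of $\mathbb{A}^{-1}_{\tau,0}$; it is the Neumann-series step that upgrades this to the operator-norm statement claimed in the proposition, so that step is not optional. Your second suggested route (estimating the difference quotient directly and absorbing via coercivity) is, in substance, the paper's proof.
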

\begin{proof}
Observe since $\mathbb{A}_{\tau+h,0}u_{\tau+h}=\mathbb{A}_{\tau,0}u_\tau=u$,
\begin{align}\label{preq1}
(\mathbb{A}_{\tau+h,0}(u_{\tau+h}-u_\tau+h {\mathcal L}_\tau u),\phi)=&
(\mathbb{A}_{\tau+h,0}u_{\tau+h}-\mathbb{A}_{\tau+h,0}u_\tau + h\mathbb{A}_{\tau+h,0}{\mathcal L}_\tau u, \phi)_{H^2_0(D)}\notag \\
=&(\mathbb{A}_{\tau,0}u_{\tau}-\mathbb{A}_{\tau+h,0}u_\tau + h\mathbb{A}_{\tau+h,0}{\mathcal L}_\tau u_\tau, \phi)_{H^2_0(D)}\notag \\
=&-(2th+h^2) \int_{D}\left( \frac{1}{n_0-1}+1 \right )u_\tau \phi \, \mathrm{d}x \notag \\
&-h
\int_D \frac{1}{n_0-1} (u_\tau \Delta\phi + \Delta u_\tau \phi)\,\mathrm{d}x\notag \\
&+ h(\mathbb{A}_{\tau+h,0}{\mathcal L}_\tau u, \phi)_{H^2_0(D)}.\end{align}
\noindent 
From the definition of the bilinear form, there exists a constant depending on $\tau$ and $D$ such that
\begin{align}
(\mathbb{A}_{\tau+h,0}&u,\phi)_{H^2_0(D)}=( \mathbb{A}_{\tau,0}u,\phi)_{H^2_0(D)} +h\left (\Delta u + \tau u, \frac{1}{n_0-1}\phi \right)_{L^2(D)} \notag \\ 
&+ h \left (\frac{1}{n_0-1} u, \Delta \phi + \tau \phi \right )_{L^2(D)} + 2h(\tau+h)\left ( \left( \frac{1}{n_0-1}+1 \right ) u,\phi \right )_{L^2(D)} \notag \\
&=  ( \mathbb{A}_{\tau,0}u,\phi)_{H^2_0(D)} + O(h\|u\|_{H^2_0(D)}\|\phi\|_{H^2_0(D)})
\end{align}
where the above estimate uses that $H^2_0$ is embedded in $C^0$.
Using the above inequality and (\ref{derividen}), we obtain
\begin{align}
(\mathbb{A}_{\tau+h,0}{\mathcal L}_\tau u, \phi)_{H^2_0(D)}=&
\left ( \frac{1}{n_0-1} u_\tau, \Delta \phi\right)_{L^2(D)} +\left ( \frac{1}{n_0-1} \Delta u_\tau, \phi\right)_{L^2(D)} \notag \\&
+ 2\tau\left ( \left( \frac{1}{n_0-1}+1 \right ) u_\tau, \phi\right )_{L^2(D)}
+O\left (h\|u_\tau\|_{H^2_0(D)}\|\phi\|_{H^2_0(D)}\right ).
\end{align}
Substituting this into (\ref{preq1}) yields
\begin{align}
(\mathbb{A}_{\tau+h,0}(u_{\tau+h}-u_\tau+h {\mathcal L}_\tau u),\phi)=&
-h^2 \int_{D} \left( \frac{1}{n_0-1}+1 \right )u_\tau \phi \mathrm{d} x + O(h^2\|\phi\|_{H^2_0(D)}) \notag \\
&\leq C h^2  \left( \frac{1}{n_0-1}+1 \right ) \|u_\tau\|_{L^2(D)} \|\phi\|_{H^2_0(D)}\notag \\
&+O\left (h^2\|u_\tau\|_{H^2_0(D)}\|\phi\|_{H^2_0(D)}\right ).
\end{align}
Of course, we have the bound
\begin{equation}
\|u_\tau\|_{L^2(D)} \leq C \| u_\tau \|_{H^2_0(D)} \leq C \| A^{-1}_{\tau,0}\|_{\mathcal{L}(H^2_0(D))} \|u \|_{H^2_0(D)}.
\end{equation}
Choosing $\phi= u_{\tau+h}-u_\tau +h {\mathcal L}_\tau u$, we have by coercivity that
\begin{equation}
C \|u_{\tau+h}-u_\tau +h {\mathcal L}_h u\|_{H^2_0(D)}=O\left (h^2\|u\|_{H^2_0(D)}\right )
\end{equation}
where $C$ can be chosen to be independent of $\tau$.
To finish, we divide by $h\|u\|_{H^2_0(D)}C$ and take the supremum over $u\in H^2_0(D)$,
\begin{equation}
\frac{
\| \mathbb{A}^{-1}_{\tau+h,0} - \mathbb{A}^{-1}_{\tau,0} + h {\mathcal L}_\tau \|_{\mathcal{L}(H^2_0(D))}}{h} = O(h).
\end{equation}
Therefore the Frechet derivative $D\mathbb{A}^{-1}_{\tau,0}(\tau) =  -{\mathcal L}_\tau$.
\end{proof}
\subsection{A trace theorem}\label{trace}
We define here the traces of the Cauchy data  of functions on $L_\Delta^2(D)$, where
 $$L^2_{\Delta}(D):=\left\{u\in L^2(D): \Delta u\in L^2(D)\right\}.$$
More precisely their trace and their normal derivative on the boundary live in $H^{-{\frac{1}{2}}}({\partial D})$ and $H^{-{\frac{3}{2}}}({\partial D})$, respectively.  Indeed if $u\in L^2_{\Delta}(D)$ then its trace  $u\in H^{-{\frac{1}{2}}}(\partial D)$ is defined  by duality using the identity
 $$\left<u,\tau\right>_{H^{-{\frac{1}{2}}}(\partial D), H^{{\frac{1}{2}}}(\partial D)}=\int_D\left(u\Delta w-w\Delta u\right)\,dx$$
 where $w\in H^2(D)$ is such that $w=0$ and $\partial w/\partial \nu=\tau$. Similarly, the trace of $\partial u/\partial \nu\in H^{-{\frac{3}{2}}}(\partial D)$ is defined  by duality using the identity
 $$\displaystyle{\left<\frac{\partial u}{\partial \nu},\tau\right>_{H^{-{\frac{3}{2}}}(\partial D), H^{{\frac{3}{2}}}(\partial D)}=-\int_D\left(u\Delta w-w\Delta u\right)\,dx}$$
 where $w\in H^2(D)$  is such that $w=\tau$ and $\partial w/\partial \nu=0$. The above shows that the trace operator
 $$u\in L_\Delta^2(D)\mapsto \left(u_{\partial D}, \frac{\partial u}{\partial \nu}|_{\partial D}\right)\in H^{-{\frac{1}{2}}}({\partial D})\times H^{-{\frac{3}{2}}}({\partial D})$$
 has continuous right inverse, i.e
 $$\|u\|_{H^{-{\frac{1}{2}}}({\partial D})}+\left\|\frac{\partial u}{\partial \nu}\right\|_{H^{-{\frac{3}{2}}}({\partial D})}\leq C\left(\|\Delta u\|_{L^2(D)}+\|u\|_{L^2(D)}\right)$$
 with $C>0$ independent of $u$.
\section{Acknowledgments} F. Cakoni was partially supported by NSF grant DMS-24-06313. S. Moskow was partially supported by NSF grants DMS-2008441 and DMS-2308200. The authors would also like to acknowledge the AIM institute and the SQuaRE program "Scattering Properties of Multiscale Heterogeneous Media", which also supported this research. 

\bibliographystyle{plain}
\bibliography{citationsR}

\begin{thebibliography}{10}

\bibitem{CaGiHa}
F.~Cakoni, D.~Gintides, and H.~Haddar.
\newblock The existence of an infinite discrete set of transmission
  eigenvalues.
\newblock {\em SIAM J. Math. Anal.}, 42(1):237--255, 2010.

\bibitem{CaHa}
F.~Cakoni and H.~Haddar.
\newblock On the existence of transmission eigenvalues in an inhomogeneous
  medium.
\newblock {\em Appl. Anal.}, 88(4):475--493, 2009.

\bibitem{eigm1}
Fioralba Cakoni, David Colton, and Houssem Haddar.
\newblock On the determination of {D}irichlet or transmission eigenvalues from
  far field data.
\newblock {\em C. R. Math. Acad. Sci. Paris}, 348(7-8):379--383, 2010.

\bibitem{CCH-book}
Fioralba Cakoni, David Colton, and Houssem Haddar.
\newblock {\em Inverse scattering theory and transmission eigenvalues},
  volume~98 of {\em CBMS-NSF Regional Conference Series in Applied
  Mathematics}.
\newblock Society for Industrial and Applied Mathematics (SIAM), Philadelphia,
  PA, second edition, [2023] \copyright 2023.

\bibitem{CaGuMoPa}
Fioralba Cakoni, Bojan~B. Guzina, Shari Moskow, and Tayler Pangburn.
\newblock Scattering by a bounded highly oscillating periodic medium and the
  effect of boundary correctors.
\newblock {\em SIAM J. Appl. Math.}, 79(4):1448--1474, 2019.

\bibitem{CaHaHa}
Fioralba Cakoni, Houssem Haddar, and Isaac Harris.
\newblock Homogenization of the transmission eigenvalue problem for periodic
  media and application to the inverse problem.
\newblock {\em Inverse Probl. Imaging}, 9(4):1025--1049, 2015.

\bibitem{CaHaMo}
Fioralba Cakoni, Isaac Harris, and Shari Moskow.
\newblock The imaging of small perturbations in an anisotropic media.
\newblock {\em Comput. Math. Appl.}, 74(11):2769--2783, 2017.

\bibitem{CaMoPa}
Fioralba Cakoni, Shari Moskow, and Tayler Pangburn.
\newblock Limiting boundary correctors for periodic microstructures and inverse
  homogenization series.
\newblock {\em Inverse Problems}, 36(6), 2020.

\bibitem{CaMoRo}
Fioralba Cakoni, Shari Moskow, and Scott Rome.
\newblock The perturbation of transmission eigenvalues for inhomogeneous media
  in the presence of small penetrable inclusions.
\newblock {\em Inverse Probl. Imaging}, 9(3):725--748, 2015.

\bibitem{nguyenn}
Jean Fornerod and Hoai-Minh Nguyen.
\newblock The {W}eyl law of transmission eigenvalues and the completeness of
  generalized transmission eigenfunctions without complementing conditions.
\newblock {\em SIAM J. Math. Anal.}, 55(4):3959--3999, 2023.

\bibitem{Fr}
G.~A. Francfort.
\newblock Homogenisation of a class of fourth order equations with application
  to incompressible elasticity.
\newblock {\em Proceedings of the Royal Society of Edinburgh: Section A
  Mathematics}, 120(1--2):25--46, 1992.

\bibitem{FuMo}
Alexander Furia and Shari Moskow.
\newblock First order corrections to the homogenized scattering resonances for
  periodic scatterers.
\newblock {\em Journal of mathematical physics}, 66(4):16, 2025.

\bibitem{GeMa1}
David G{{\'e}}rard-Varet and Nader Masmoudi.
\newblock Homogenization in polygonal domains.
\newblock {\em J. Eur. Math. Soc. (JEMS)}, 13(5):1477--1503, 2011.

\bibitem{GeMa2}
David G{{\'e}}rard-Varet and Nader Masmoudi.
\newblock Homogenization and boundary layers.
\newblock {\em Acta Math.}, 209(1):133--178, 2012.

\bibitem{Ki}
Andreas Kirsch.
\newblock A note on {S}ylvester's proof of discreteness of interior
  transmission eigenvalues.
\newblock {\em C. R. Math. Acad. Sci. Paris}, 354(4):377--382, 2016.

\bibitem{eigm2}
Andreas Kirsch and Armin Lechleiter.
\newblock The inside-outside duality for scattering problems by inhomogeneous
  media.
\newblock {\em Inverse Problems}, 29(10):104011, 21, 2013.

\bibitem{MoVo1}
S.~Moskow and M.~Vogelius.
\newblock First-order corrections to the homogenised eigenvalues of a periodic
  composite medium. {A} convergence proof.
\newblock {\em Proc. Roy. Soc. Edinburgh Sect. A}, 127(6):1263--1299, 1997.

\bibitem{MoVo2}
S.~Moskow and M.~Vogelius.
\newblock First order corrections to the homogenized eigenvalues of a periodic
  composite medium: The case of neumann boundary conditions.
\newblock http://www.math.drexel.edu/\~{}moskow/, 1997.

\bibitem{Mo}
Shari Moskow.
\newblock Nonlinear eigenvalue approximation for compact operators.
\newblock {\em J. Math. Phys.}, 56(11):113512, 11, 2015.

\bibitem{high2}
Weisheng Niu, Zhongwei Shen, and Yao Xu.
\newblock Convergence rates and interior estimates in homogenization of higher
  order elliptic systems.
\newblock {\em J. Funct. Anal.}, 274(8):2356--2398, 2018.

\bibitem{high}
Weisheng Niu and Yao Xu.
\newblock Uniform boundary estimates in homogenization of higher-order elliptic
  systems.
\newblock {\em Ann. Mat. Pura Appl. (4)}, 198(1):97--128, 2019.

\bibitem{Os}
J.~E. Osborn.
\newblock Spectral approximation for compact operators.
\newblock {\em Math. Comput.}, 29:712--725, 1975.

\bibitem{Pa}
S.~Pastukhova.
\newblock Operator error estimates for homogenization of fourth order elliptic
  equations.
\newblock {\em St. Petersburg Mathematical Journal}, 28:1, 02 2017.

\bibitem{rob}
L.~Robbiano.
\newblock Spectral analysis of the interior transmission eigenvalue problem.
\newblock {\em Inverse Problems}, 29(10):104014, 18, 2013.

\bibitem{SaVo}
Fadil Santosa and Michael Vogelius.
\newblock First-order corrections to the homogenized eigenvalues of a periodic
  composite medium.
\newblock {\em SIAM J. Appl. Math.}, 53(6):1636--1668, 1993.

\bibitem{Va}
Alberto Valli.
\newblock A well-posed variational formulation of the neumann boundary value
  problem for the biharmonic operator.
\newblock 2023.

\bibitem{vodev}
Georgi Vodev.
\newblock High-frequency approximation of the interior {D}irichlet-to-{N}eumann
  map and applications to the transmission eigenvalues.
\newblock {\em Anal. PDE}, 11(1):213--236, 2018.

\bibitem{vodev2}
Georgi Vodev.
\newblock Interior transmission problems with coefficients of low regularity.
\newblock {\em Inverse Probl. Imaging}, 19(1):1--33, 2025.

\end{thebibliography}
\end{document}